\documentclass[12pt]{amsart}

\usepackage{include}

\usepackage[margin=2.9cm]{geometry}

\title{On the Unique Ergodicity of Quadratic Differentials and the Orientation Double Cover}
\date{\today}

\begin{document}

\author{M. E. Smith}

\address{Department of Mathematics, University of Utah,  Salt Lake City, UT, USA} 


\email{msmith@math.utah.edu}

\maketitle

\begin{abstract}
We construct an example of a uniquely ergodic measured foliation on a surface such that the associated translation flow on the orientation double cover is minimal but not uniquely ergodic. We then prove a geometric criterion for the horizontal foliation of a quadratic differential to be uniquely ergodic. The second theorem generalizes a result of Trevi\aTL{n}o for the horizontal flow on a translation surface \cite{MR3177386}, as well as Masur's criterion for unique ergodicity of the horizontal foliation \cite{MR1167101}.
\end{abstract}

\section{Introduction}

A \E{flat surface} is a pair $(S,\Ph)$ where $S$ is a compact Riemann surface and $\Ph$ is a meromorphic quadratic differential on $S$ with at most simple poles. The differential $\Ph$ determines a flat Riemannian metric with conical singularities at the set $\Si$ of zeroes and poles of $\Ph$. We are most interested in the case when $(S,\Ph)$ has \E{finite area} in terms of the area form of $\Ph$, which forces any poles of $\Ph$ to be simple. If $\Ph$ is holomorphic the $\Ph$-area will always be finite. In this case $\Ph$ determines a Borel probability measure $\la$ on $S$, which we call \E{Lebesgue measure}. We would like to know if there are any other Borel probability measures naturally associated with $(S,\Ph)$.

Away from the zeroes or poles, we can write $\Ph$ in local coordinates as $\ph(z) (\Rd z)^2$ for some holomorphic function $\ph$. The kernel of $\Im\b(\sqrt{\ph(z)} \, \Rd z\e)$ determines a singular measured foliation of $S$, which we call the \E{horizontal foliation}. Following Masur and Smillie \cite{MR1135877}, we say that a Borel probability measure $\mu$ on $S$ is \E{ergodic} for $\Ph$ if for every $\mu$-measurable set $E \se S$ which is a union of leaves of the horizontal foliation, $\mu(E) = 0$ or $\mu(E)  = 1$. A quadratic differential $\Ph$ is \E{uniquely ergodic} if Lebesgue measure is the only ergodic measure.

If $\al$ is an abelian differential on $S$, then $(S,\al)$ is called a \E{translation surface}: away from zeroes of $\al$, there exist charts for $S$ whose transition maps are all translations. Since $\al^2$ is a quadratic differential, we have a singular Riemannian metric and a Lebesgue measure on $(S,\al)$. Further, the kernel of $\Im \al$ determines a parallel unit vector field $X$ on $S \m \Si$. In this case the horizontal foliation of $\al^2$ is given by a unit speed flow along $X$, called the \E{horizontal flow}. This flow always preserves the Lebesgue measure. If $\mu$ is an ergodic probability measure for the horizontal flow, then by observing that for any measurable $E$ which is a union of leaves and flow invariant, $\mu(E) = 0$ or $\mu(E) = 1$, justifying our definition of ergodicity for quadratic differentials.

For any flat surface which is not a translation surface we can construct a unique branched cover $p : \HS \to S$ which "unfolds" $\Ph$ to an abelian differential $\al$. Statements about transverse measures associated to the horizontal foliation of $\Ph$ can be restated in terms of the horizontal flow of $\al$. Our first result is negative: passing to the orientation double cover can increase the number of ergodic measures.

\begin{thm}
There exists a compact flat surface $(S,\Ph)$ whose horizontal foliation is uniquely ergodic, but on the orientation double cover $(\HS,\al)$, the horizontal flow is minimal but not uniquely ergodic.
\label{th:double NUE}
\end{thm}

Despite being a universal object, the orientation double cover can fail to capture all of the dynamical properties of the original foliation.

Our second result is positive, and gives a criterion for the horizontal foliation of a compact flat surface to be uniquely ergodic. The criterion is based on the natural $\SL_2(\R)$ action on the moduli space of  quadratic differentials, given by post-composing every chart in a flat atlas with the usual action of $\SL_2(\R)$ on $\R^2$. In particular, consider the subgroup of diagonal matrices:
\[
g_t = \begin{bmatrix} e^{-t} & 0 \\ 0 & e^{t} \end{bmatrix}
\]
The action of $g_t$ on $\Ph$ determines a flow on the moduli space of flat surfaces called the \E{Teichm\"uller flow}. There are many connections between the Teichm\"uller flow orbit of $\Ph$ and the ergodic measures associated to $\Ph$. In our case, we will consider the \E{systole} of a flat surface, which is the length of the shortest simple closed curve on $(S,\Ph)$ which is not homotopically trivial. Our main theorem says that if the systole does not shrink rapidly along the Teichm\"uller orbit of the surface, then the horizontal foliation is uniquely ergodic.
\begin{thm}
Let $\ka(t)$ be the systole of the compact flat surface $g_t \dt (S, \Ph)$. If:
\begin{equation}
\int_0^{\oo} \ka(t)^2 \, \Rd t = \oo
\label{eq:systole}
\end{equation}
then the horizontal foliation determined by $\Ph$ is uniquely ergodic.
\label{th:foliation unique ergodicity}
\end{thm}

This theorem is based on work of Rodrigo Trevi\aTL{n}o, who proved the same geometric criterion for unique ergodicity of the horizontal flow of an abelian differential $\al$ \cite[Theorem 4]{MR3177386}.

\begin{thm}[Trevi\aTL{n}o]
Let $\ka(t)$ be the systole of the compact translation surface $g_t \dt (S, \al)$. If:
\begin{equation}
\int_0^{\oo} \ka(t)^2 \, \Rd t = \oo
\end{equation}
then the horizontal flow determined by $\al$ is uniquely ergodic.
\label{th:rodrigo theorem 4}
\end{thm}

Theorem \ref{th:foliation unique ergodicity} implies Masur's criterion for unique ergodicity, which is the case where $\limsup_{t \to \oo} \ka(t) > 0$.

\begin{thm}[Masur's criterion \cite{MR1167101}] If the horizontal foliation of a flat surface is minimal and not uniquely ergodic, then the Teichm\"uller orbit (of the class of that flat metric) leaves every compact set of the moduli space.
\label{th:masur criterion}
\end{thm}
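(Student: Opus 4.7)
I would prove the contrapositive: if the Teichm\"uller orbit of $\Phi$ does not leave every compact set of moduli space, then the horizontal foliation is uniquely ergodic. The minimality hypothesis will not enter the argument.

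Suppose the Teichm\"uller orbit of $\Phi$ fails to leave some compact subset $K$ of moduli space, so there is a sequence $t_n \to \infty$ with $g_{t_n} \cdot \Phi \in K$. By Mumford's compactness criterion---a subset of the moduli space of compact flat surfaces of fixed topological type is precompact if and only if the systole is bounded below---there exists $\epsilon > 0$ such that $\kappa(t_n) \geq \epsilon$ for all $n$.

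The next step is the log-Lipschitz regularity of the systole along the Teichm\"uller flow. Since $g_s$ distorts lengths of curves by at most a factor of $e^{|s|}$, a closed geodesic realizing $\kappa(t)$ on $(S, g_t \cdot \Phi)$ has length at most $e^{|s|} \kappa(t)$ on $(S, g_{t+s} \cdot \Phi)$; this gives $\kappa(t+s) \geq e^{-|s|} \kappa(t)$, and in particular $\kappa \geq \epsilon/2$ on each interval $[t_n - \log 2,\, t_n + \log 2]$. After passing to a subsequence so these intervals are disjoint, we obtain
\[
\int_0^{\infty} \kappa(t)^2 \, dt \; \geq \; \sum_n \int_{t_n - \log 2}^{t_n + \log 2} \kappa(t)^2 \, dt \; \geq \; \sum_n 2 \log 2 \cdot \frac{\epsilon^2}{4} \; = \; \infty,
\]
and Theorem \ref{th:foliation unique ergodicity} concludes that the horizontal foliation is uniquely ergodic.

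The only real content is bridging the integrability statement $\int \kappa^2 \, dt < \infty$ (the contrapositive hypothesis of Theorem \ref{th:foliation unique ergodicity}) to the pointwise decay $\kappa(t) \to 0$ that Mumford's criterion needs; the log-Lipschitz estimate is the right tool, and it is an elementary consequence of the definition of $g_s$. No serious obstacle arises, which is appropriate since Masur's criterion is intended as a corollary of the main theorem.
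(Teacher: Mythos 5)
Your proposal is correct and fills in exactly the details the paper leaves implicit. The paper's entire treatment of Masur's criterion is the single remark that Theorem \ref{th:foliation unique ergodicity} covers ``the case where $\limsup_{t \to \infty} \kappa(t) > 0$''; your argument correctly identifies that recurrence to a compact set gives $\limsup \kappa > 0$ via Mumford compactness, and correctly supplies the log-Lipschitz regularity $\kappa(t+s) \geq e^{-|s|}\kappa(t)$ as the bridge from a positive $\limsup$ to the divergence of $\int_0^\infty \kappa(t)^2\,\mathrm{d}t$. (One minor remark: as you note, the minimality hypothesis in Masur's statement plays no role in this derivation, but it is also not vacuous in the converse direction --- recurrence forces minimality, since a horizontal saddle connection or cylinder core would contract under $g_t$ and push the orbit out of every compact set --- so the hypothesis is harmless to drop from the contrapositive.)
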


Theorem \ref{th:foliation unique ergodicity} also implies a result of Cheung and Eskin. Let $\De_t$ denote the length of the smallest saddle connection on $g_t \dt (S, \Ph)$, and let $d(t) = -\log \De_t$. By an argument of Trevi\aTL{n}o \cite{MR3177386}, we can choose $\ep = \frac{1}{2}$ in the following theorem.
\begin{thm}[Cheung and Eskin \cite{MR2388697}] 
There is an $\ep > 0$ such that if $d(t) <  \ep \log t + C$ for some C and for all $t > 0$, then the horizontal foliation is uniquely ergodic.
\end{thm}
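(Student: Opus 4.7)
The plan is to show that the hypothesis forces $\int_0^\infty \kappa(t)^2 \, dt = \infty$ and then invoke Theorem \ref{th:foliation unique ergodicity}; I will take $\ep = \tfrac{1}{2}$, as suggested in the paragraph preceding the statement. Since $d(t) = -\log \Delta_t$, the hypothesis $d(t) < \tfrac{1}{2}\log t + C$ is equivalent to $\Delta_t > e^{-C} t^{-1/2}$ for all sufficiently large $t$, so the whole argument reduces to comparing the systole $\kappa(t)$ with the shortest saddle connection length $\Delta_t$ on $(S, g_t \dt \Ph)$.

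The key comparison is a lower bound of the form $\kappa(t) \geq c\, \Delta_t$ for some absolute constant $c > 0$. To prove it, let $\gamma$ be any essential simple closed curve on the flat surface and let $\gamma^*$ be its geodesic representative in free homotopy, so $\mathrm{length}(\gamma^*) \leq \mathrm{length}(\gamma)$. On a flat surface, $\gamma^*$ is either (i) a concatenation of saddle connections, in which case $\mathrm{length}(\gamma^*) \geq \Delta_t$ immediately, or (ii) a regular closed geodesic contained in a maximal flat cylinder. In case (ii), each boundary component of the cylinder is a cycle of saddle connections whose total length is a fixed positive multiple of the core length of the cylinder, forcing $\mathrm{length}(\gamma^*) \geq c\,\Delta_t$. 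Taking the infimum over $\gamma$ gives $\kappa(t) \geq c\,\Delta_t$.

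Combining the two bounds, for all sufficiently large $t$ we have $\kappa(t)^2 \geq c^2 e^{-2C}/t$, so
\[
\int_0^\infty \kappa(t)^2 \, dt \;\geq\; c^2 e^{-2C} \int_{T_0}^\infty \frac{dt}{t} \;=\; \infty
\]
for an appropriate $T_0$, and Theorem \ref{th:foliation unique ergodicity} finishes the proof.

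The main obstacle I anticipate is verifying the cylinder/saddle-connection dichotomy cleanly in the half-translation setting, particularly keeping track of the precise relationship between core length and boundary length for the types of cylinders that can arise on a quadratic differential (where a non-orientable cylinder can have core length equal to half its boundary length, rather than the full length as in the translation surface case handled by Trevi\~no). Fortunately this subtlety only affects the constant $c$ in the comparison, not the final divergence of the integral, so it can be absorbed without difficulty.
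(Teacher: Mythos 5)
Your derivation is the expected one: the paper does not spell out a proof of this statement but cites ``an argument of Trevi\~no'' allowing $\ep = \frac{1}{2}$, and you have correctly supplied exactly that argument, unwinding $d(t) < \frac{1}{2}\log t + C$ to $\Delta_t > e^{-C} t^{-1/2}$, comparing $\kappa(t) \geq c\,\Delta_t$, and observing $\int_{T_0}^\infty t^{-1}\,dt = \infty$ so that Theorem~\ref{th:foliation unique ergodicity} applies. A minor remark: the comparison actually holds with $c=1$, since the geodesic representative of a shortest essential curve is either a concatenation of saddle connections (hence of length at least $\Delta_t$) or the core of a flat cylinder, and a cylinder's core has the same length as each boundary component, which is itself a union of saddle connections; the half-translation subtlety you flag does not arise on an orientable flat surface, but as you correctly note it would only shift the constant and not the divergence.
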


By the example in Theorem \ref{th:double NUE}, Theorem \ref{th:foliation unique ergodicity} cannot be a direct consqeuence of Trevi\aTL{n}o's theorem in general. If the quadratic differential $\Ph$ is not orientable but has no odd order zeroes or poles, then $p : \HS \to S$ is a genuine connected covering space. In this case there is a proof of unique ergodicity given Trevi\aTL{n}o's criterion which is much shorter than our proof in the general case. The proof is a consequence of the following theorem.

\begin{thm}
Suppose that $(S,\Ph)$ is a flat surface and $p : R \to S$ is a finite degree genuine covering map (i.e. without branching). If the systoles of $g_t \dt (S,\Ph) $ satisfy the systole condition (\ref{eq:systole}), then so do the systoles of $g_t \dt (R, p^*\Ph)$. In particular, the horizontal foliation (or flow) on $(R,p^*\Ph)$ is uniquely ergodic by Theorem \ref{th:foliation unique ergodicity}.
\label{th:cover ergodicity}
\end{thm}

When the orientation cover is a genuine cover, Theorem \ref{th:foliation unique ergodicity} is a direct consequence of Theorem \ref{th:rodrigo theorem 4} and Theorem \ref{th:cover ergodicity}. Theorem \ref{th:cover ergodicity} has the following consequence for covers of typical flat surfaces.

\begin{cor}
For almost every flat surface $(S,\Ph)$ (with respect to the Masur-Veech measure on strata), the horizontal foliations of both $(S,\Ph)$ and every finite degree genuine cover $(R,p^*\Ph)$ of $(S,\Ph)$ are simultaneously uniquely ergodic. In particular, almost every flat surface which is a finite degree genuine cover is uniquely ergodic.
\label{cor:generic cover}
\end{cor}
This follows from Theorem \ref{th:cover ergodicity} and the ergodicity of Teichm\"uller geodesic flow with respect to the Masur-Veech measure, which implies almost every flat surface is recurrent. Masur and Veech independently constructed the measure and proved ergodicity in \cite{MR644018} and \cite{MR644019}. Note that surfaces which are genuine covers of lower genus surfaces form a measure zero set with respect to the Masur-Veech measure.

\B{Outline of the paper:} First, we need to clarify the definition of the systole and the transverse measure in order for Trevi\aTL{n}o's methods to apply to our situation. This is done in Section \ref{sec:defs}.

The example in Theorem \ref{th:double NUE} is constructed in Section \ref{sec:double NUE}. We consider a family of minimal translation flows which are minimal but not uniquely ergodic, then show that they are all orientation double covers of uniquely ergodic foliations.

Our argument for Theorem \ref{th:foliation unique ergodicity} proceeds in two steps: We first prove Theorem \ref{th:foliation ergodicity}, based on \cite[Theorem 2]{MR3177386}, which implies that Lebesgue measure is ergodic for the horizontal foliation if there exists a nice "thick-thin" decomposition of the surface along its Teichm\"uller flow orbit. Such a decomposition exists by using the systole condition (\ref{eq:systole}). 
Our proof heavily uses technology developed by Forni \cite{MR1888794}. Note that we cannot just move the geometry to the double cover and cite Trevi\aTL{n}o, by the example in Theorem \ref{th:double NUE}. Indeed, in many steps we will need to use the covering map to translate computations back and forth between the original surface and its orientation double cover.

In Section \ref{sec:foliation unique ergodicity} we finish the proof of Theorem \ref{th:foliation unique ergodicity}. Using the systole condition again, we can upgrade from ergodicity to unique ergodicity. The major difficulty is in constructing a map in the style of Veech \cite[Section 1]{MR516048} which normalizes a particular measure to the Lebesgue measure without distorting the geometry too much. We need to modify the known map in the abelian differential case so that it will work for quadratic differentials.

In Section \ref{sec:covers}, we provide a proof of Theorem \ref{th:cover ergodicity} using topological techniques for covering maps that are not available in the general case. As the counterexample in Theorem \ref{th:double NUE} demonstrates, one needs to be careful about statements relating the measures of an orientation double cover to the original flat surface. We do need $p : \HS \to S$ to be a covering map for this proof to work, and so Theorem \ref{th:foliation unique ergodicity} is not just a simple corollary of Trevi\aTL{n}o's theorem except in special cases. We do not expect Theorem \ref{th:cover ergodicity} to hold for covers when Trevi\aTL{n}o's criterion does not apply.

\B{Remarks:} There is a gap in Trevi\aTL{n}o's proof of the ergodicity criterion, as no argument was given for why \cite[Equation 27]{MR3177386} should hold. We present an argument to fix this gap in Lemma \ref{lem:essential gradient bound}, which is essential to proving ergodicity for Theorem \ref{th:foliation unique ergodicity}. Trevi\aTL{n}o's paper also considered flows on non-compact translation surfaces with finite area, while our paper only considers compact flat surfaces. We were unable to find a proof of Lemma \ref{lem:essential gradient bound} which also works for translation flows on non-compact translation surfaces, and our proof uses compactness of the surface several times. We believe that it may be possible to find a proof which does not use compactness, in which case we would obtain Trevi\aTL{n}o's ergodicity criterion \cite[Theorem 2]{MR3177386} for non-compact flat surfaces as well, however, such a proof would require knowledge of the behavior of singularities on non-compact surfaces. See the technical remark at the end of the proof of Theorem \ref{th:foliation ergodicity} and equation (\ref{eq:infinite integral noncompact}) for details on what we can currently prove for the non-compact case.

 The example of Theorem \ref{th:double NUE} is a meromorphic quadratic differential on a surface of genus $g = 1$. On higher genus surfaces, it may be possible to find orientation double covers with other interesting effects. In particular, the orientation cover may double some ergodic measures and not others. Given any $k \ge 1$ and $k \le n \le 2k$, is it possible to find a quadratic differential whose horizontal foliation is minimal and has $k$ ergodic measures, and such that the horizontal flow on the orientation double cover is minimal and has $n$ ergodic measures? We believe the answer is yes. We also believe that it is possible to find examples where the quadratic differential is holomorphic.


\B{Acknowledgements:} The author was partially supported by NSF grants DMS-135500 and DMS-1452762, as well as the research training grant DMS-1246989. I would like to thank Leonard Carapezza for helpful conversations. I am also grateful to Rodrigo Trevi\aTL{n}o and an anoynmous referee for providing many helpful suggestions and corrections on initial drafts of the paper. Finally, I would like to thank Jon Chaika for suggesting this problem and all of his advice during the creation of this paper, and Mladen Bestvina, who asked me to find the counterexample in Theorem \ref{th:double NUE}.

\section{Definitions}
\label{sec:defs}

Given a flat surface $(S,\Ph)$ we can construct its \E{orientation double (branched) cover}: there is a translation surface $(\HS, \al)$ and a degree 2 branched covering map  $p : \HS \to S$ so that $p^* \Ph = \al^2$. The branch points of $p$ are the odd order zeroes or poles of $\Ph$, and away from the branch points, $p$ is a local isometry. $\HS$ is disconnected if and only if $\Ph = \al^2$ for some abelian differential $\al$. 
The translation surface $\HS$ comes with an involution, an isometry $\io : \HS \to \HS$ which exchanges $\al$ and $-\al$ and fixes the ramification points of $p$. We can think of $(S,\Ph)$ as the orbifold quotient of $(\HS, \al)$ by the involution $\io$. The standard construction is described in more detail in Lemma \ref{lem:universal property}. Figure \ref{fig:minimal NUE} depicts the orientation double cover of the surface in Figure \ref{fig:double NUE} (both figures are in Section \ref{sec:double NUE}).

\begin{defn} We say that a Borel probability measure $\mu$ on a compact flat surface $(S,\Ph)$ is an \E{invariant measure for the horizontal foliation} if there is an invariant Borel probability measure $\nu$ for the horizontal flow on $(\HS,\al)$ such that $\mu = p_*\nu$. There is a simplex $C$ of invariant measures for the horizontal flow, and we say that an invariant measure is \E{ergodic} if it is an extreme point of the simplex $p_*C$.
\label{def:QD ergodic measure}
\end{defn}

The purpose of this definition is to ensure some kind of "flow invariance" for measures on $S$, even though we do not have a flow on $S$. We could also consider all \E{transverse invariant measures}. These gadgets give lengths to arcs transverse to the horizontal foliation and are invariant under homotopy along the foliation.

\begin{prop}
Every projection of a horizontal flow invariant measure on $\HS$ determines a transverse invariant measure for the horizontal foliation on $S$, and vice versa.
\label{prop:QD ergodic measure}
\end{prop}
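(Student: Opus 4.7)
The plan is to establish the correspondence first on the translation surface $(\HS,\al)$, where we have a genuine flow, and then transport it downstairs to $S$ via the branched covering $p$.

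First, I would recall the classical construction on any translation surface: a Borel measure $\nu$ on $\HS$ invariant under the horizontal flow is in natural bijection with a transverse invariant measure $\tau_\nu$ for the horizontal foliation of $\al$. Indeed, in any flow box $B \cong T \times I$, with $T$ a small transversal and $I$ a time interval, flow invariance forces $\nu|_B = \tau_\nu \otimes ds$ with $ds$ the Lebesgue measure along $I$; holonomy invariance of $\tau_\nu$ then follows from compatibility of $\nu$ on overlapping boxes. Conversely, any transverse invariant measure determines a flow-invariant measure by this product formula, glued across boxes using holonomy invariance.

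Next, I would transport this correspondence down to $S$. Let $\Si_{\mathrm{br}} \se \Si$ denote the finite set of branch points of $p$; on the complement $\HS \m p^{-1}(\Si_{\mathrm{br}})$, the map $p$ is an unramified double cover of $S \m \Si_{\mathrm{br}}$ and a local isometry, so it sends horizontal leaves to horizontal leaves and local transversals to local transversals. Given a flow-invariant $\nu$ on $\HS$ with transverse measure $\tau_\nu$, I define a transverse measure $\tau$ on $S$ by $\tau(T) := \tau_\nu(p^{-1}(T))$ for any transversal $T \se S \m \Si_{\mathrm{br}}$; summing over the two preimage sheets makes this well defined, and holonomy invariance on $S$ is inherited from $\HS$. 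A short flow-box computation in a local chart where $\Ph$ admits a single-valued square root then shows that $\tau$ is exactly the transverse measure produced by the projected measure $p_*\nu$ on $S$.

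For the reverse direction, given a transverse invariant measure $\mu^\perp$ on $S$ I would pull it back to $\HS$: for any transversal $\widetilde T \se \HS \m p^{-1}(\Si_{\mathrm{br}})$, set $\widetilde{\mu}^\perp(\widetilde T) := \mu^\perp(p(\widetilde T))$. Holonomy invariance lifts because $p$ is a local homeomorphism taking leaves to leaves. By the first step, $\widetilde{\mu}^\perp$ corresponds to a flow-invariant measure $\nu$ on $\HS$, and by the previous step $p_*\nu$ recovers $\mu^\perp$ as its transverse measure.

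The main technical nuisance will be bookkeeping at the branch locus: one must check that $\Si_{\mathrm{br}}$, being a finite set, is null for every measure under consideration, and that the push/pullback constructions extend across $\Si_{\mathrm{br}}$ without pathology (for instance, that a transversal through a branch point splits into two local transversals whose measures add correctly). The only other point that requires genuine checking is that the transverse measure associated to $p_*\nu$ via local flow-box decompositions downstairs agrees with the direct pushforward $p_*\tau_\nu$; this reduces to the observation that the image of a product measure under a local isometry is again a product measure.
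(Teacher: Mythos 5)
Your proposal is correct and takes essentially the same approach as the paper: both directions rest on the local flow-box/product decomposition of invariant measures together with the fact that $p$ is a local isometry away from the branch locus. The only difference is organizational — you set up the flow-box correspondence on $\HS$ and then transport it along $p$, while the paper constructs the transverse measure directly on $S$ from $p_*\nu$ and, for the converse, first builds the invariant measure on $S$ from rectangles and then lifts it to $\HS$ as $\tfrac{1}{2}(\mu+\io_*\mu)$ — which amounts to the same computation.
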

\begin{proof}
Each projection $p_*\nu$ of an invariant measure $\nu$ for the horizontal flow of $\al$ determines a transverse invariant measure $\Upsilon$ for the horizontal foliation of $\Ph$:
\begin{equation}
\Upsilon(\ga) = \lim_{\ep \to 0} \frac{1}{\ep} p_* \nu ([0,\ep] \dt \ga)
\end{equation}
where $[0,\ep] \dt \ga$ is the set formed by flowing $\ga$ along the foliation for all times between $0$ and $\ep$. As $\nu$ is flow-invariant upstairs, $\Upsilon$ will be a transverse invariant measure.

Conversely, given a transverse invariant measure $\Upsilon$ for the horizontal foliation of $\Ph$, we can construct an invariant measure $\mu$ on $S$. Indeed, if $\ga$ is a small enough curve contained in a leaf of the vertical foliation of $\Ph$ in a neighborhood of a regular point, then we can flow $\ga$ for all times in $[0,\ep]$ for some small $\ep > 0$ to create a rectangle $R$. We define $\mu(R) = \Upsilon(\ga) \times \ep$ for such rectangles $R$, which uniquely determines a finite Borel measure $\mu$ on $S$. Viewing $\HS$ measurably as two copies of $S$, the measure $\nu = \frac{1}{2} \b(\mu + \io_* \mu \e)$ is a finite measure on $\HS$ with $p_* \nu = \mu$. Because $\Upsilon$ has a transverse invariance property for homotopy along the foliation, $\nu$ is an invariant measure for the horizontal flow on $\HS$.

\end{proof}

Our definition of ergodicity reflects dynamical properties of the foliation, as the next proposition shows.
\begin{prop}
The following are equivalent for an invariant measure $\mu$ for the horizontal foliation of a compact flat surface $(S,\Ph)$ of finite area.
\begin{enumerate}
\item $\mu$ is ergodic for the horizontal foliation.
\item For any $f \in L^2(\mu)$ such that $f$ is constant along leaves, $f$ is constant $\mu$-almost everywhere.
\item For any $f \in L^2(\mu)$ such that $p^*f = f \o p$ is invariant under the horizontal flow, $f$ is constant $\mu$-almost everywhere.
\item For every measurable $E \se S$ which is a union of leaves, $\mu(E) = 0$ or $\mu(E) = 1$.
\end{enumerate}
\label{prop:ergodic definitions}
\end{prop}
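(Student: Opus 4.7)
The plan is to establish the cycle $(1) \Rightarrow (3) \Rightarrow (2) \Rightarrow (1)$, using throughout that $p$ takes horizontal leaves of $\HS$ to horizontal leaves of $S$ and that $\io$ preserves the set of flow-invariant probability measures on $\HS$ (the involution reverses the horizontal flow, but flow-invariance is symmetric in time).

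For $(1) \Rightarrow (3)$, I would argue the contrapositive. Suppose $E \subseteq S$ is a measurable union of leaves with $0 < \mu(E) < 1$, and write $\mu = p_*\nu$ for some $\nu \in C$. Then $p^{-1}(E) \subseteq \HS$ is a union of horizontal leaves, hence flow-invariant, and restricting $\nu$ to $p^{-1}(E)$ and to its complement and renormalizing yields flow-invariant probability measures $\nu_1, \nu_2 \in C$ satisfying $\nu = \mu(E)\nu_1 + (1-\mu(E))\nu_2$. The push-forwards $p_*\nu_1, p_*\nu_2 \in p_*C$ are supported on $E$ and $S \setminus E$ respectively, so they are distinct and exhibit $\mu$ as a non-trivial convex combination in $p_*C$.

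For $(3) \Rightarrow (2)$, I would take any $f \in L^2(\mu)$ with $p^*f$ flow-invariant. Replacing $p^*f$ by its time-average from Birkhoff's theorem gives a strictly flow-invariant representative, which is constant along $\nu$-a.e. horizontal leaf of $\HS$ (since flow orbits on a translation surface coincide with horizontal leaves). Pushing down, $f$ is constant on $\mu$-a.e. leaf of $S$, so each level set $\{f \leq c\}$ agrees modulo a $\mu$-null set with a measurable union of leaves. Applying $(3)$ gives $\mu\{f \leq c\} \in \{0,1\}$ for every $c$, which forces $f$ to be constant $\mu$-a.e.

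For $(2) \Rightarrow (1)$, I would suppose $\mu = t\mu_1 + (1-t)\mu_2$ is a non-trivial convex combination in $p_*C$ with $\mu_i = p_*\nu_i$, $\nu_i \in C$, and $t \in (0,1)$. First replace each $\nu_i$ by $\tfrac{1}{2}(\nu_i + \io_*\nu_i)$; this keeps $p_*\nu_i = \mu_i$ and makes $\nu_i$ both flow- and $\io$-invariant. Setting $\nu = t\nu_1 + (1-t)\nu_2$ and $g = d\nu_1/d\nu$ produces a Radon-Nikodym derivative bounded by $1/t$, and a direct calculation using the flow- and $\io$-invariance of $\nu$ and $\nu_1$ shows $g$ is itself flow- and $\io$-invariant $\nu$-a.e. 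Hence $g$ descends through $p$ to a bounded measurable $f$ on $S$ with $f \o p = g$ $\nu$-a.e., and $p^*f$ is flow-invariant. By $(2)$, $f$ is constant $\mu$-a.e., so $\nu_1 = \nu$ and $\mu_1 = \mu$, contradicting non-triviality.

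The main obstacle is the descent step in $(2) \Rightarrow (1)$: to convert a decomposition of $\mu$ upstairs into a non-constant function on $S$, one must symmetrize the lifts under $\io$ before forming the Radon-Nikodym derivative, and then verify that flow-invariance survives both the symmetrization and the descent through $p$. The forward directions $(1) \Rightarrow (3)$ and $(3) \Rightarrow (2)$ are relatively direct once one keeps careful track of null sets and the correspondence between leaves upstairs and downstairs.
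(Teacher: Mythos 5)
The paper offers no proof of this proposition, remarking only that ``the proof is standard.'' Your cycle $(1)\Rightarrow(3)\Rightarrow(2)\Rightarrow(1)$ is a correct instance of that standard argument, and the two adaptations you make for the branched double cover---using that $p^{-1}(E)$ is flow-invariant upstairs whenever $E$ is a union of leaves downstairs, and symmetrizing $\nu_1,\nu_2$ by $\io$ before taking the Radon--Nikodym derivative so that it descends through $p$---are exactly the points where the quadratic-differential setting differs from the classical one, and you handle both correctly.
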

The proof is standard.



We also need to carefully define the systole. The Riemann sphere $\C\P^1$ can support meromorphic quadratic differentials with finite area, however, there are no homotopically nontrivial curves on a sphere, and so there is not immediately a natural shortest length. In the literature, poles of order 1 for a meromorphic quadratic differential are thought of as punctures on the surface $S$. Taking these punctures into account gives the correct definition.

\begin{defn}
Let $(S,\Ph)$ be a compact flat surface of finite area, and let $P \se S$ be the set of poles of $\Ph$. Let $S^* = S \m P$. The \E{systole} $\ka$ of $(S,\Ph)$ is the flat length of the shortest essential simple closed curve on $S^*$. That is, $\ka$ is the shortest length among curves $\ga$ which are homotopically nontrivial on $S^*$ and which are not homotopic to any puncture.
\label{def:systole}
\end{defn}

There are curves homotopic to punctures with arbitrarily small length, so we exclude them in order to make the systole positive. Note that $\C\P^1$ must be punctured at least four times in order to support a meromorphic quadratic differential of finite area, see \cite[Chapter IV, Section 12]{MR743423}. With at least four punctures, there are curves on the punctured sphere which are homotopically nontrivial and not homotopic to any puncture, so the systole is well defined for surfaces with enough complexity to support flat structures of finite area. By Mumford's compactness criteria, the only way for a sequence of these flat surfaces to escape to infinity is for each surface to have a short essential simple closed curve, shrinking to length zero. 

\section{A Nonergodic Double Cover}
\label{sec:double NUE}

\begin{proof}[Proof of Theorem \ref{th:double NUE}]
First we construct a translation surface $(\HS,\al)$ by gluing two identical tori along a slit in such a way that the horizontal flow is minimal but not uniquely ergodic. For example, one could begin with a the standard torus of area 1 given by a unit square in the plane. We can skew the square using the matrix:
\[
\begin{bmatrix}
1 & 0\\
R & 1\\
\end{bmatrix}
\]
The first return map of the horizontal flow on the skewed torus to a vertical cross-section is rotation by $R$. By gluing two such tori along a vertical segment of length $K$, we obtain a translation surface of genus 2 as in Figure \ref{fig:minimal NUE}, with $\al$ given by $\Rd z$ in the plane. By a theorem of Veech, for any irrational $R$ with unbounded partial quotients in its continued fraction expansion, there are uncountably many $K$ such that the horizontal flow is minimal but not uniquely ergodic \cite{MR0240056}. See also \cite{MR0207961} and \cite{MR0391184} for related constructions. Let $\mu_0,\mu_1$ denote the two ergodic measures, both of which must be absolutely continuous with respect to Lebesgue measure.

\begin{figure}[h]	
	\centering
	\includegraphics[scale = 0.4]{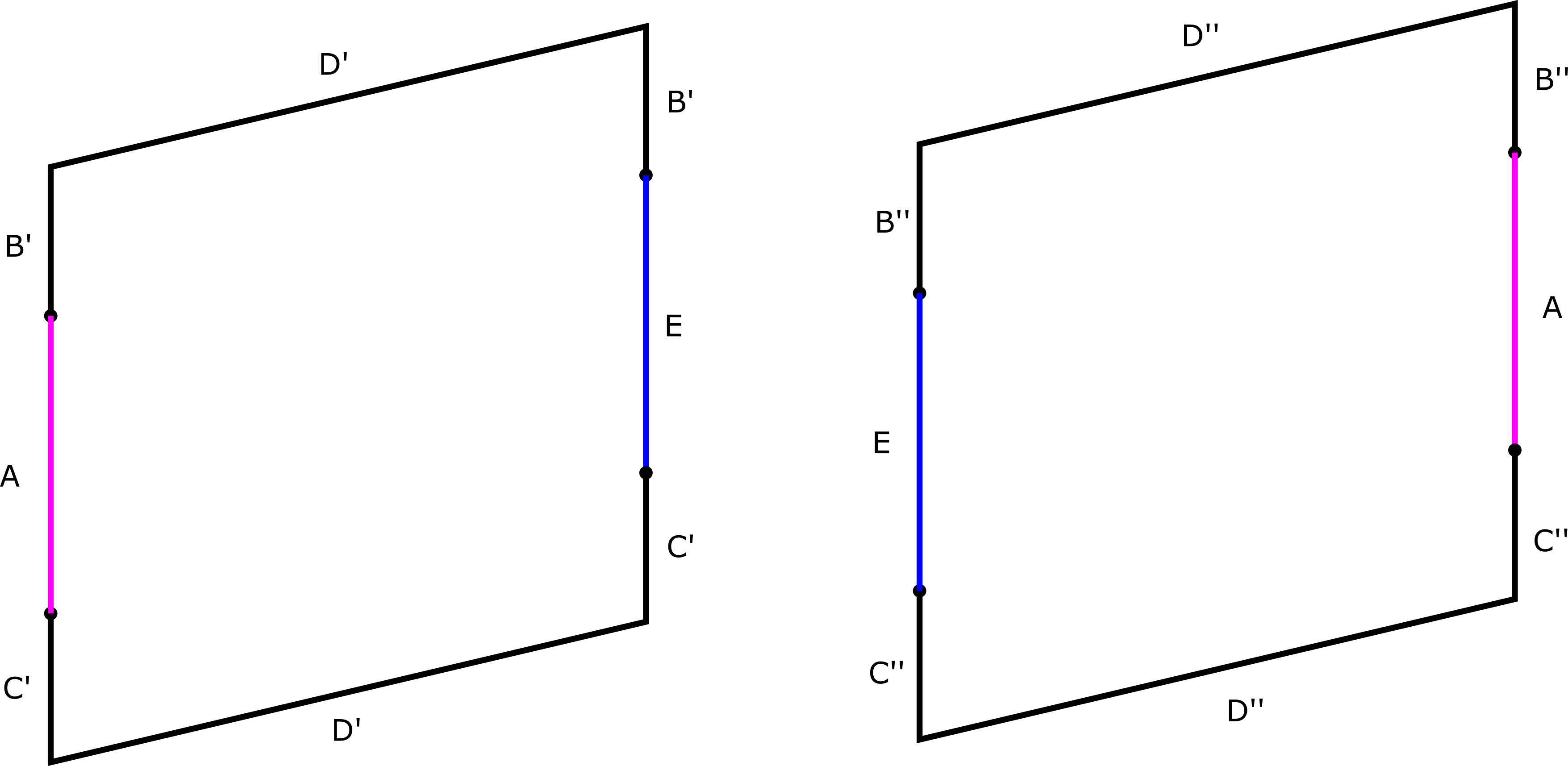}	
	\caption{A polygon representation of the translation surface $(\HS,\al)$. Sides with the same label are glued to each other by translation. The slit is given by the blue and purple cuves labeled A and E. The two zeroes of $\al$ each have order 1 and lie at the endpoints of A, which are the same as the endpoints of E.}
	\label{fig:minimal NUE}
\end{figure}

There is a holomorphic involution $\si : \HS \to \HS$ given by swapping the labels of the two tori. If we view the polygons for our surface as lying in $\C \x \Z/2$ as in Figure \ref{fig:minimal NUE}, then $\si$ is given by the map $\si(z,j) = (z,j + 1 \md 2)$, and $\si^*(\al) = \al$. Note that $\si_*(\mu_0) = \mu_1$ and vice versa. There is another holomorphic involution given by rotating both tori by 180 degrees, which is the hyperelliptic involution $h : \HS \to \HS$. In terms of the polygons, $h(z,j) = (-z,j)$, and so $h^*(\al) = - \al$. Observe that $h$ and $\si$ commute, and so the map $\io = h \o \si$ is an order two conformal automorphism of $\HS$ such that $\io^* \al = - \al$. Define $S$ to be $\HS / \g{\io}$. Because $\io^*(\al^2) = \al^2$ but $\io^* (\al) = -\al$, $\al^2$ descends to a quadratic differential $\Ph$ on $S$ which is not the square of an abelian differential. The flat surface $(S,\Ph)$ is an element of the strata $\CQ(2,-1,-1)$ of quadratic differentials, as depicted in Figure \ref{fig:double NUE}, and can be thought of as a "torus with a pocket". We would like to show that $(\HS, \al)$ is the orientation double cover of $(S,\Ph)$, and that the horizontal foliation of $(\HS,\Ph)$ is uniquely ergodic.

\begin{figure}[h]	
	\centering
	\includegraphics[scale = 0.4]{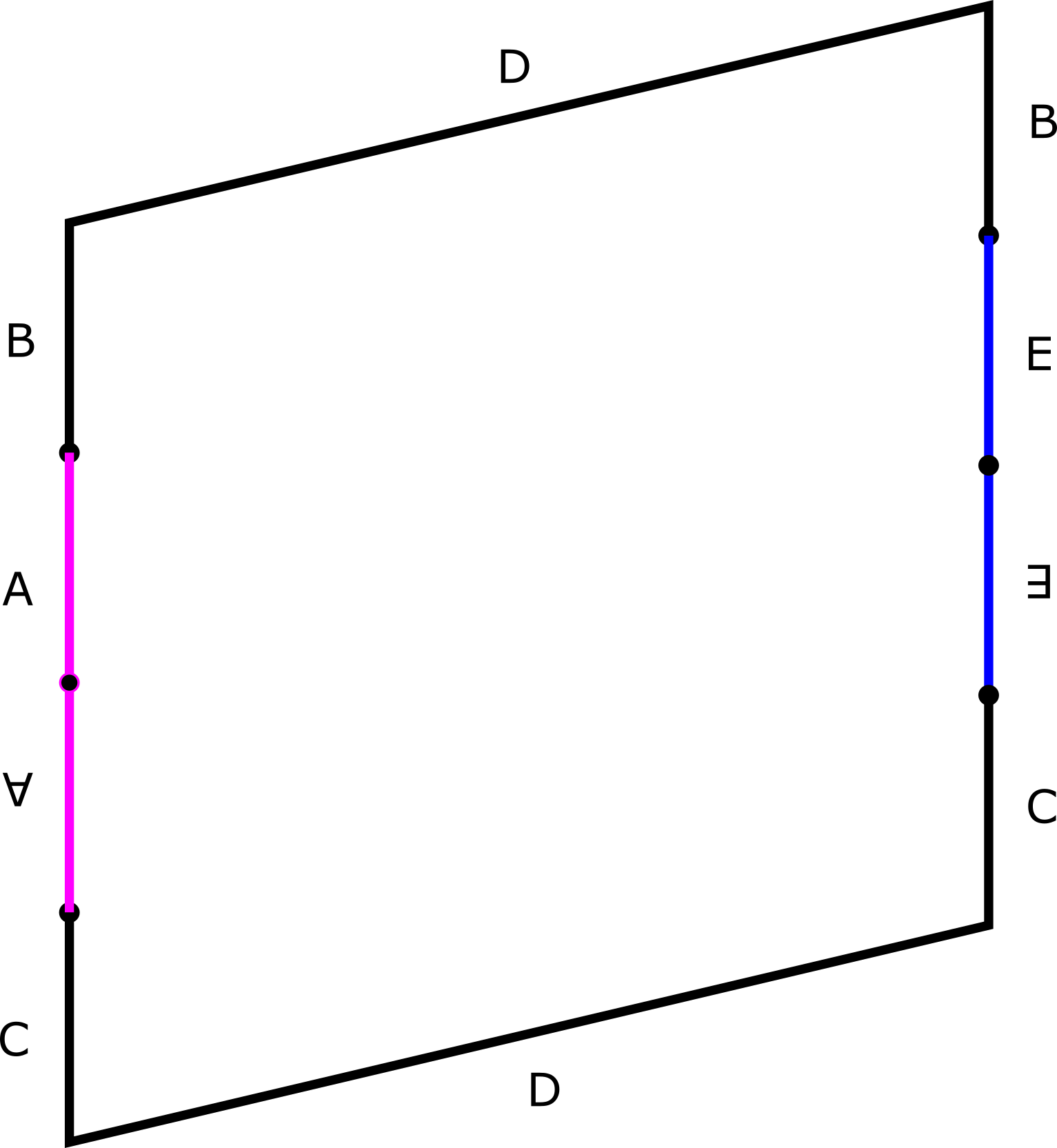}	
	\caption{A polygon representation of the flat surface $(S,\Ph)$. Sides with the same label are glued to each other by translation, except for the sides labeled A and E, which are glued by a rotation by $\pi$. The two simple poles of $\Ph$ are where A and E are glued to their rotated copies, and the single zero of order 2 lies at the other endpoint of A, which is identified to the other endpoint of E.}
	\label{fig:double NUE}
\end{figure}

To show $(\HS, \al)$ is the orientation double cover of $(S,\Ph)$, we will use the following universal property.

\begin{lem}[Universal Property of the Orientation Double Cover]
Let $(X,\Psi)$ be a compact flat surface where $\Psi$ is not the square of an abelian differential, and $(\HX,\om)$ its orientation double cover with projection map $\pi : \HX \to X$, $\pi^* (\Psi) = \om^2$. If $(Y,\be)$ is any translation surface with a holomoprhic map $f : Y \to X$ such that $f^*(\Psi) = \be^2$, then there is a unique holomorphic map $\Hf : Y \to \HX$ such that $f = \pi \o \Hf$ and $\Hf^*(\om) = \be$.
\label{lem:universal property}
\end{lem}

\begin{proof}
First we recall the standard construction of the orientation double cover. Let $Z \se X$ be the set of zeroes and poles of $\Psi$. Choose an atlas $\{U_i\}$ of charts for $X \m Z$ where each chart is connected and simply connected, so that $\sqrt{\Psi}$ is well defined on each $U_i$. The orientation double cover $(\HX, \om)$ is constructed by taking two copies $U_i^{\pm1}$ of each chart $U_i$, corresponding to each local choice of $\pm \sqrt{\Psi}$ on $U_i$. We glue two sets $U_i^a$ and $U_j^b$ together by the following rule: when $U_i \i U_j$ is nonempty, we identify the part of $U_i^a$ in the intersection with the part of $U_j^b$ in the intersection whenever $a\sqrt{\Psi} = b\sqrt{\Psi}$. This defines a punctured Riemann surface $\TX$, and a holomorphic map $\pi : \TX \to X$ defined by $\pi(x,\pm1) = x$. On $\TX$, there is an abelian differential $\om$ defined locally by $a\sqrt{\Psi}$ on each $U_i^a$. The surface $\TX$ can be extended to the surface $\HX$ by filling in the punctures in a canonical way, and $\om$ extends to $\HX$ as well.

Given the map $f : Y \to X$ with $f^*(\Psi) = \be^2$, away from $Z$ and the branch/ramification loci for $f$ we can define $\Hf : Y \to \HX$ in local coordinates  by $\Hf(y) = (f(y),s)$. Here $s \in \{+1,-1\}$ is the sign of $\pm\sqrt{\Psi}$ so that in any of the sets $U_i$ as above containing $f(y)$, $f^*(s\sqrt{\Psi}) = \be$. This map is well defined. Indeed, if $U_j$ is another open set containing $f(y)$, then there is some sign $s'$ such that $f^*(s'\sqrt{\Psi}) = \be$ on $U_j$. This can only happen if $s\sqrt{\Psi} = s'\sqrt{\Psi}$ on the part of $U_i \i U_j$ containing $f(y)$, which is exactly the gluing condition for the orientation double cover.

The map $\Hf$ is then holomorphic, extends to the zeroes of $\be$ by Riemann's extension theorem, and satisfies $\pi \o \Hf = f$ and $\Hf^* (\om) = \be$ by definition of $\pi$ and $\om$.

\end{proof}

Now let $(\HX,\om)$ be the orientation double cover of $(S,\Ph)$ with covering map $\pi$. By the universal property, the quotient map $q : \HS \to S$ factors through a map $\Hq : \HS \to \HX$ with $\Hq^*(\om) = \al$. As both covering maps $\pi$ and $q$ are degree 2, the map $\Hq$ is a degree 1 map of compact Riemann surfaces, since $q = \pi \o \Hq$. Hence $\Hq$ is a biholomorphism preserving the flat structure, so $(\HS,\al)$ can be canonically identified as the orientation double cover of $(S,\Ph)$.

To show that the horizontal foliation of $(\HS,\Ph)$ is uniquely ergodic, it suffices to show that the map $\io$ switches the two ergodic measures. Indeed, any invariant probability measure for the horizontal flow on $(\HS,\al)$ is given by $t \mu_0 + (1 - t)\mu_1$ for some $t \in [0,1]$ So, any invariant measure $\nu$ for the horizontal foliation of $(S,\Ph)$ must have the form $q_*(t \mu_0 + (1 - t)\mu_1) = tq_*(\mu_0) + (1-t)q_*(\mu_1)$. Observe that since $\io$ switches $\mu_0$ and $\mu_1$, and $q : \HS \to S$ is given by modding out by the involution $\io$, $q_*(\mu_0) = q_*(\mu_1)$: For any Borel set $E$ on $S$,
\[
q_*(\mu_0)(E) = \mu_0(q^{-1}E) = \mu_0(\io^{-1} q^{-1} E) = \io_* (\mu_0)(q^{-1}(E)) = \mu_1(q^{-1}E) = q_*(\mu_1)(E).
\]
Hence the measure $\nu$ is just $q_*(\mu_0) = q_*(\mu_1)$. Since this is the unique invariant measure, it must be the natural Lebesgue measure given by $\Ph$.

Since the map $\si$ switches the two ergodic measures on $(\HS,\al)$, it suffices to show that $h$ leaves the two measures invariant. Then $\io = h \o \si$ will switch the two ergodic measures. For $\mu$ an ergodic measure for the horizontal flow, let $A_\mu \in H_1 (\HS;\R)$ be the asymptotic cycle associated to $\mu$, as in \cite{MR0088720} and \cite{MR0331438}. The asymptotic cycle is nonzero and uniquely determines the ergodic measure. So, if we can determine how $h$ acts on asymptotic cycles, then we determine how it changes the ergodic measures for the horizontal flow. Observe the following properties:
\begin{enumerate}
\item The hyperelliptic involution $h$ satisfies $h^*(\al) = - \al$, and so $h$ is a topological conjugacy between the horizontal flow and its inverse flow.
\item The invariant measures for a flow and the inverse flow are the same.
\item If $A_\mu$ is an asymptotic cycle for the horizontal flow, then $-A_\mu$ is the asymptotic cycle for the inverse flow for the same measure $\mu$.
\item The map $h_* : H_1(\HS;\R) \to H_1(\HS;\R)$ is given by $h_*(a) = -a$.
\item Since $h_*(A_\mu) = -A_\mu$, it follows that $h_*(\mu) = \mu$ for every ergodic measure for the horizontal flow.
\end{enumerate}
So the map $\io$ exchanges the two ergodic measures for the horizontal flow, and so we have constructed a flat surface $(S,\Ph)$ whose horizontal foliation is uniquely ergodic, but the horizontal flow on the orientation double cover $(\HS,\al)$ is minimal but not uniquely ergodic.

\end{proof}

\section{An Ergodicity Criterion}
\label{sec:foliation ergodicity}

In this section we prove the following criterion for ergodicity of Lebesgue measure for a horizontal foliation. In the next section we will prove that the criterion also implies unique ergodicity.

\begin{thm}
Let $\ka(t)$ be the systole of the compact flat surface $S_t = g_t \dt (S,\Ph)$. If:
\begin{equation}
\int_0^{\oo} \ka(t)^2 \, \Rd t = \oo
\label{eq:systole2}
\end{equation}
then the horizontal foliation determined by $\Ph$ is ergodic.
\label{th:foliation ergodicity}
\end{thm}


We would like to call attention to the different roles of four small parameters that appear in the proof: $\et, \ep, \de_t, \tx{ and } \ka(t)$. The number $\et$ will be an arbitrary positive number. Given $\et$, we find $\ep$ by removing an open set of total measure at most $\et$ containing a ball of radius $\ep$ in $S_t$ centered at every singular point $\si \in \Si$. We will call the complement of this open set $S_{\ep,t}$. After removing these open sets, the resulting surface $S_{\ep,t}$ is likely disconnected. There will be $C_t$ components, call them $S_t^i$, and the $i$-th component has diameter $\CD^i_t$. The parameter $\de_t$ is the \E{injectivity radius} of $S_t$, and also measures how close we need to travel to a singularity if we want to travel in $S_t$ between two connected components of $S_{\ep,t}$. In general $\de_t$ is much smaller than $\ep$, and reflects properties of the "thin" part of the surface, especially for divergent Teichm\"uller geodesics. The systole $\ka(t)$ is equal to $2\de_t$. Indeed, $\de_t$ is the flat injectivity radius of the surface $S_t$, which is half of the length of the shortest essential simple closed curve.

\begin{proof}[Proof of Theorem \ref{th:foliation ergodicity}]

Our proof of ergodicity follows the strategy for \cite[Theorem 2]{MR3177386}, with the additional complication of the orientation covering map. Fix $(S,\Ph)$ as in the hypothesis and choose an abelian differential $\al$ on $\HS$ with $p^* \Ph = \al^2$. Recall that when $(S_t,\Ph_t)$ is the Teichm\"uller geodesic flow of the surface by $g_t$, the Lebesgue measure on $S$ is preserved by $g_t$. Hence  $L^2(S_t) = L^2(S)$ for all $t$. Now we fix a function $u \in L^2(S)$ such that $p^* u$ is invariant under the translation flow. We assume that $u$ is real valued and has zero average, and want to show that $u \ee 0$ Lebesgue almost everywhere.

The computation $u \ee 0$ proceeds by a series of lemmas. First, we will decompose $u = m_t + h_t$, where $m_t$ is a meromorphic function on $S$. We will then show that $m_t$ is zero, which is the longest step and where the flat geometry plays the largest role. Finally, we will show that $h_t$ is zero using other properties of the geometry and dynamics of the foliation. 


\begin{lem}
The function $u$ can be written as $u = m_t + h_t$ in $L^2(S_t)$, where $m_t$ is meromorphic on $S_t$ and $h_t$ is orthogonal to $m_t$. Further, $p^*u = p^* m_t + \d_t v_t$ for a differential operator $\d_t$ determined by $\al_t$.
\end{lem}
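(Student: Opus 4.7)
The plan is to work on the orientation double cover $\HS_t$, apply a Forni-style orthogonal decomposition of $L^2(\HS_t)$ into meromorphic functions and their orthogonal complement, and then descend the resulting splitting to $S$ using the involution $\io$. First I would pull $u$ back to $\widetilde{u} = p^* u \in L^2(\HS_t)$; since $p$ is a degree two local isometry away from the branch locus this is a bounded operation, and the lifted function is automatically $\io$-invariant because it is the pullback of a function on $S$.

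Next I would invoke the Hodge-type orthogonal decomposition of $L^2(\HS_t)$ associated with the complex structure carried by $\al_t$, which is the technology of \cite{MR1888794}. This yields
\[
L^2(\HS_t) = \mathcal{M}_t \oplus \mathcal{M}_t^{\perp},
\]
where $\mathcal{M}_t$ is the closed subspace consisting of meromorphic functions on $\HS$ whose pole orders at the zeros of $\al_t$ are compatible with $L^2$ integrability, and whose orthogonal complement is the $L^2$-closure of the image of a first-order differential operator $\d_t$ built from $\al_t$ (essentially a Cauchy-Riemann operator written in flat coordinates for $\al_t$). Applied to $\widetilde{u}$ this produces $\widetilde{u} = \widetilde{m}_t + \d_t v_t$ with $\widetilde{m}_t \in \mathcal{M}_t$.

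Finally I would check that both summands are $\io$-invariant so that the decomposition descends to $S$. Although $\io^* \al_t = -\al_t$, the involution $\io$ is a biholomorphism of $\HS$ and an isometry of the underlying flat metric, so $\io^*$ preserves both $\mathcal{M}_t$ and $\mathcal{M}_t^{\perp}$. The $\io$-invariance of $\widetilde{u}$ then forces $\io^* \widetilde{m}_t = \widetilde{m}_t$, so $\widetilde{m}_t$ descends to a meromorphic function $m_t$ on the orbifold quotient $S = \HS / \io$; setting $h_t = u - m_t$ completes the decomposition in $L^2(\HS_t)$, and the identity $p^* h_t = \d_t v_t$ is then automatic. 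The main obstacle is setting up Forni's decomposition precisely enough to identify the operator $\d_t$ in a form compatible with the later steps of the argument, where the horizontal flow invariance of $p^* u$ will be used to derive information about $v_t$; a secondary obstacle is the ramified behavior near the odd-order singularities of $\Ph$, where $\io$-invariant meromorphic data on $\HS$ must be interpreted correctly as meromorphic data on the quotient $S$ while respecting the $L^2$ integrability conditions at the orbifold points.
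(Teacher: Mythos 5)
Your proof is correct, but it inverts the logical flow of the paper's argument. The paper decomposes $u$ directly in $L^2(S)$ by first proving that the space $\CM$ of meromorphic functions is closed in $L^2(S)$ (via a Cauchy-integral-formula compactness argument), giving $u = m_t + h_t$ downstairs; it then pulls back to $\HS$, invokes Forni's decomposition $p^*u = \Hm_t + \d_t v_t$, and uses uniqueness to match $\Hm_t = p^*m_t$ and $\d_t v_t = p^*h_t$. You instead work entirely on $\HS$: apply Forni's decomposition there, observe that the deck involution $\io$ is a biholomorphic isometry so $\io^*$ preserves both $\mathcal{M}_t$ and its orthogonal complement, conclude by uniqueness that $\io^*\widetilde{m}_t = \widetilde{m}_t$, and push the meromorphic part down to $S$. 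Your route buys you something: you never need to prove directly that $\CM$ is a closed subspace of $L^2(S)$, since that is absorbed into Forni's result upstairs; the paper's route avoids having to reason about $\io$-equivariance of Forni's splitting. Two small points you should make explicit: (i) the orthogonality $h_t \perp m_t$ in $L^2(S)$ follows because $p^*$ is an $L^2$-isometry onto its image (so orthogonality of $p^*h_t = \d_t v_t$ and $p^*m_t$ in $L^2(\HS)$ descends); and (ii) near a branch point, in coordinates where $\io$ acts by $z \mapsto -z$ and $w = z^2$ is the coordinate on $S$, the $\io$-invariance of $\widetilde{m}_t$ forces its Laurent series to contain only even powers of $z$, so it really is meromorphic in $w$ and lies in $L^2(S)$ again by the isometry — this is the ``secondary obstacle'' you flagged, and the check is short but worth writing out.
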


\begin{proof}
Let $\CM_t$ be the space of meromorphic functions in $L^2(S_t)$. By using the Cauchy integral formula, one can show if a sequence of meromorphic functions converges in $L^2(S_t)$, then the sequence converges uniformly on compact sets of $S_t \m \Si$. Hence the $L^2(S_t)$ limit of a sequence of meromorphic functions must be meromorphic, and so $\CM_t$ is a closed supspace of $L^2(S_t)$. So there is an orthogonal decomposition $L^2(S_t) = \CM_t \op \CM_t^{\perp}$, and so $u = m_t + h_t$ for unique $m_t \in \CM_t$ and $h_t \in \CM_t^{\perp}$.


Now observe that the linear map $p^* : L^2(S_t) \to L^2(\HS_t)$ defined by $p^*f = f \o p$ isometrically embeds $L^2(S_t)$ as a closed subspace of $L^2(\HS_t)$. Indeed,
\[
\int_{\HS_t} |p^* f|^2 \, \Rd \la = 2 \times \frac{1}{2} \int_{S_t} |f|^2 \, \Rd (p_* \la)
\]
as $p$ is a degree 2 covering map almost everywhere, we can view $\HS_t$ measurably as two copies of $S_t$ with half the measure. Hence $\|p^* f\|_{L^2(\HS_t)} = \|f\|_{L^2(S_t)}$. This shows that $p^*$ is an isometry. 
Observe that if $\io$ is the involution of $\HS_t$ given by the orientation double cover, then $L^2(S_t)$ is an $\io^*$ invariant subspace of $L^2(\HS_t)$. Hence it is closed since $\io^*$ acts continuously on $L^2(\HS_t)$.

On $\HS_t$, away from the singularities we can find unit vector fields $X,Y$ which are tangent to the horizontal, vertical foliations of $\al^2$ respectively. By a result of Forni, $L^2(\HS_t)$ decomposes as the orthogonal direct sum of meromorphic functions on $\HS_t = (\HS, g_t \dt \al)$ and functions in the image of the anti-Cauchy-Riemann operator on $\HS_t$ determined by $g_t \dt \al$:
\begin{equation}
\d_t = e^t X + ie^{-t} Y
\end{equation}
(see \cite[Proposition 3.2]{MR1477760}). Hence we can write $p^*u = \Hm_t + \d_t v_t$ in $L^2(S_t)$, where the decomposition is again unique. By our previous computations we must also have $p^*u = p^* m_t + p^* h_t$, with $p^*m_t$ meromorphic on $\HS_t$ and orthogonal to $p^*h_t$. Hence by uniqueness of the decomposition, we must have $\Hm_t = p^* m_t$ and $\d_t v_t = p^* h_t$.

\end{proof}

We assume without loss of generality that $v_t$ is a function of zero average, since this assumption does not affect $\d_t v_t$. By this assumption, and because $u$ is real valued, it follows  that $v_t$ is purely imaginary. Since $p^*u$ is  flow invariant, we have $X(p^*u) = 0$, so we also get the following fact about $m_t$ from \cite[Lemma 2]{MR3177386}.
\begin{equation}
\f{\Rd}{\Rd t} \|p^*m_t\|^2_{L^2(\HS)} = 4\|\Im(p^* m_t) \|^2_{L^2(\HS)}
\label{eq:mero ODE}
\end{equation}
We will use this differential equation and the decomposition of the surface $S_t$ to show that $\|m_t\|_{L^2(S)} = 0$.

Fix $\et > 0$, and find the resulting $\ep > 0$ and the surface $S_{\ep,t}$. For a fixed $t > 0$ we can decompose $m_t = R_t + iI_t$ into the real and imaginary parts. For a fixed $z \in p^{-1}\b(S_{\ep,t}\e)$, and an $\al_t$ disc of radius $R < \ep$ centered at $z$ in a small enough flat chart, we can use the Cauchy integral formula for derivatives to compute:
\[
X_t \b(p^*I_t(z) \e) = \f{1}{\pi} \int_0^{2\pi} \frac{\cos(s)}{R} p^* I_t(Re^{is}) \, \Rd s
\]
We can then integrate the above equation over a small annulus near $z$ to obtain the bound:
\[
\b| X_t \b(p^*I_t (z) \e) \e| \le \f{4}{\ep^2} \|p^*I_t\|_{L^2(\HS)}
\]
The same bound holds for $Y_t \b(p^* I_t \e)$ by the same computation, which gives us the trivial bound:
\[
\b\|\nabla_t (p^* I_t)\e\|_{L^{\oo}(p^{-1}(S_{\ep,t}))} \le \f{8}{\ep^2} \|p^*I_t\|_{L^2(\HS)}
\]
Here $\nabla_t$ is the gradient given by the flat metric $\al_t$. The same bound applies to $p^*R_t$ by the Cauchy-Riemann equations.

Now suppose $z \in S \m (S_{\ep,t} \u \Si)$. If $0 < \rho < \ep$ is the distance of $z$ to $\Si$, then similarly:
\begin{equation}
\b|\nabla_t \b( p^*R_t(z)\e)\e| \le \frac{8}{\rho^2} \|p^* I_t\|_{L^2(\HS)}
\label{eq:distance squared}
\end{equation}
The same bound holds for $p^*I_t$.

Since the branched covering map $p$ is a local isometry outside of a finite set and $\|p^*f\|_{L^2(\HS_t)} = \|f\|_{L^2(S_t)}$, we see that these gradient bounds also apply to $I_t$ and $R_t$. We will use this fact in the next lemma to bound the norm of $m_t$ using the geometry of $S_t$. Morally, the bound is a constant divided by the injectivity radius of $S_t$.


\begin{lem}
There is a constant $C > 0$ depending only on the stratum of the quadratic differential $\Ph$ so that for any points $a,b$ in $S_{\ep,t}$ the following bound holds.
\begin{equation}
\b|R_t(a) - R_t(b)\e| \le C \b(\f{1}{\ep^2} \sum_{i = 1}^{C_t} \CD^i_t + \f{1}{\de_t} \e) \|I_t\|_{L^2(S)}
\end{equation}
The same bound holds for $\b|I_t(a) - I_t(b)\e|$.
\label{lem:essential gradient bound}
\end{lem}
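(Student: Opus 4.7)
The plan is to connect $a$ to $b$ by a piecewise smooth path $\ga$ in $S$ whose pieces either lie inside a single component $S^i_t$ or bridge two adjacent components through the thin part $S \m S_{\ep(t),t}$, and then integrate the pointwise gradient bounds already derived. Because $p$ is a local isometry and $p^*$ is an $L^2$-isometry, those bounds pass from $\HS$ to $S$, and by the Cauchy--Riemann equations (since $m_t = R_t + i I_t$ is meromorphic on $S$) they apply to $R_t$ with $\|I_t\|_{L^2(S)}$ on the right as well. Form the component graph with vertices $S^1_t,\ldots,S^{C_t}_t$ and an edge between any two components admitting a connecting path in $S \m \Si$ at distance $\ge \de_t/2$ from $\Si$; the definition of $\de_t$ guarantees this is the complete graph. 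Choose a chain $S^{i_0}_t,\ldots,S^{i_k}_t$ with $k \le C_t - 1$ from the component containing $a$ to the one containing $b$, and write $\ga = \ga_0 \ast \beta_1 \ast \ga_1 \ast \cdots \ast \beta_k \ast \ga_k$, with each $\ga_\ell$ a geodesic arc in $S^{i_\ell}_t$ of length at most $\CD^{i_\ell}_t$ and each $\beta_\ell$ a bridge through the thin part.

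For each in-component segment $\ga_\ell$, the uniform gradient bound $\f{8}{\ep(t)^2} \|I_t\|_{L^2(S)}$ integrated over length $\le \CD^{i_\ell}_t$ gives $\bigl|\int_{\ga_\ell} \Rd R_t\bigr| \le \f{8\, \CD^{i_\ell}_t}{\ep(t)^2} \|I_t\|_{L^2(S)}$; summing over $\ell$ contributes the first term of the claimed bound. For each bridge $\beta_\ell$, the $\sup$ in the definition of $\de_t$ lets us arrange $\mathrm{dist}_t(\beta_\ell,\Si) \ge \de_t/2$, and then deform $\beta_\ell$ (using the local cone structure around the nearest singularity, whose angle is determined by the stratum of $\Ph$) into a near-radial descent from $\rho = \ep(t)$ to $\rho = \de_t/2$, a circumferential arc at $\rho = \de_t/2$ of total angular length bounded by the cone angle, and a near-radial return to $\rho = \ep(t)$. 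With $\Rd s \le \Rd\rho + c\rho\, \Rd\theta$ in polar coordinates and the pointwise bound $|\nabla_t R_t| \le \f{8}{\rho^2} \|I_t\|_{L^2(S)}$, we obtain
\[
\Bigl|\int_{\beta_\ell} \Rd R_t\Bigr| \le 8 \|I_t\|_{L^2(S)} \bigl( 2 \int_{\de_t/2}^{\ep(t)} \f{\Rd\rho}{\rho^2} + \f{c\,\Delta\theta}{\de_t/2} \bigr) \le \f{C'}{\de_t} \|I_t\|_{L^2(S)}
\]
for a stratum-dependent $C'$. Summing over the $k \le C_t - 1$ bridges gives the second term, and the overall constant is $C = \max(8, C')$.

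The main obstacle is precisely this bridge estimate: integrating $\rho^{-2}$ over a generic path of length $L$ at distance $\ge \de_t$ from $\Si$ only produces $L/\de_t^2$, which is much worse than the required $1/\de_t$. The resolution is the cone-point model of a singularity, in which an angular arc at radius $\rho$ has arclength proportional to $\rho$; this shrinking lets the almost-radial bridge above yield a telescoping integral $\int_{\de_t/2}^{\ep(t)} \rho^{-2} \Rd\rho = O(1/\de_t)$ together with a bounded circumferential contribution. Since the cone angles of $\Ph$ form a finite list determined by the stratum, the resulting constant is stratum-dependent. The argument for $|I_t(a) - I_t(b)|$ is identical, using $|\nabla_t I_t| \le \f{8}{\ep(t)^2} \|I_t\|_{L^2(S)}$ and $|\nabla_t I_t| \le \f{8}{\rho^2} \|I_t\|_{L^2(S)}$ directly.
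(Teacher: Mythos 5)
Your setup, the reduction to integrating the gradient bounds $|\nabla_t R_t| \le \frac{8}{\ep(t)^2}\|I_t\|_{L^2(S)}$ (thick part) and $|\nabla_t R_t| \le \frac{8}{\rho^2}\|I_t\|_{L^2(S)}$ (thin part), and the treatment of the in-component segments all match the paper. The gap is in the bridge estimate. You assert that each bridge can be deformed into a near-radial descent to $\rho = \de_t/2$, a circumferential arc around ``the nearest singularity,'' and a near-radial return; but the thin region $S \setminus S_{\ep(t),t}$ is the $\ep(t)$-neighborhood of the entire singular set, which may contain several singularities whose $\ep(t)$-balls overlap, so there need not be a single distinguished cone point or a radial coordinate along the whole bridge. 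The definition of $\de_t$ only grants the \emph{existence} of a connecting path at distance $\ge \de_t/2$ from $\Si$; it gives no control on the length or shape of that path, and, as you note yourself, a generic path of length $L$ only gives $L/\de_t^2$. You have not shown how to produce a path with the claimed radial/circumferential structure, and that construction is the real content of the thin estimate.

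The paper avoids this issue by starting from a single length-minimizing geodesic $\ga$ from $a$ to $b$ (lifting to $\HS$ in the meromorphic case), rather than building the path piecewise. Minimality is what gives quantitative control: $\ga$ meets each component of the thick part in total length $\le \CD^i_t$, and, crucially, for each singularity $\si$ its intersection with a dyadic annulus $A_k(\si) = \mathrm{ann}(\si, 2^k\de_t, 2^{k+1}\de_t)$ has total length bounded by the annulus diameter. After detouring around the $\de_t$-balls that $\ga$ actually hits (contributing $O(c|\Si|^2/\de_t)$), summing the trivial bound $\rho^{-2} \le \sum_\si \mathrm{dist}(\cdot,\si)^{-2}$ over singularities and over $k$ produces a geometric series totaling $O(|\Si|/\de_t)$, with no assumption on the shape of the thin region. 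That dyadic-annulus device applied to a minimizing geodesic is what is missing from your argument; your cone-geometry intuition (radial $\int\rho^{-2}\,\Rd\rho = O(1/\de_t)$, circumferential arcs of length $\propto \rho$) is the correct heuristic, but it has to be implemented through the geodesic rather than by positing a bridge shape.
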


\begin{proof} First we recall that the parameters in the bound were defined in the remarks before the proof of the theorem. We will recall the definitions as needed in the proof of this lemma.

If $\Ph$ is holomorphic, let $\ga$ be the unique flat geodesic on $S_t$ connecting $a$ and $b$ with minimal length. If $\Ph$ is meromorphic, then each of the at most simple poles of $\Ph$ lifts to a regular point of $\al$, so that $\al$ is a holomorphic abelian differential on $\HS$. In this case, let $\be$ be a flat geodesic on $\HS_t$ which is the shortest among all geodesics connecting one of the two points of $p\1(a)$ to one of the two points of $p\1(b)$, which is possibly not unique. Let $\ga = p(\be)$, then $\ga$ is a path from $a$ to $b$ on $S$. Either way, the curve $\ga$ is a union of straight line paths between singularities or between an endpoint and a singularity. For existence of these minimal length paths, see \cite[Chapter V, Section 18]{MR743423}.

In the meromorphic case, the following bounds may need a factor of 2 coming from the covering map being degree 2: $\ga$ might pass through the same subsurface of the decomposition multiple times, but this will happen at most twice (as otherwise we could have chosen a shorter path $\be$ on $\HS$). This does not change the result of the lemma, and in the rest of the proof we will assume that $\Ph$ is holomorphic.

Based on our gradient computations, we arrive at the following bound:
\begin{equation}
\b|R_t(z_i) - R_t(z_j)\e| \le 8\|I_t\|_{L^2(S)} \int_0^1  \b| \f{\Rd s}{\b(\mathrm{dist}_t(\ga(s),\Si)\e)^2} \e|
\label{eq:essential integral}
\end{equation}
We are interested in bounding the integral. There are three parts of the curve $\ga$ we need to consider. First is the part of $\ga$ in the "thick" part of the surface: let $Thick$ be the set of $s$ where $\mathrm{dist}_t(\ga(s),\Si) \ge \ep$. Here we do a trivial bound: the length of $\ga$ inside $Thick$, since it was chosen to have minimal length, is less than the sum of the lengths $\CD^i_t$ of diameters of each of the subsurfaces $S_t^i$ of the thick part. The maximum value of the function we are integrating is $\ep^{-2}$. So we have the bound:
\begin{equation}
\int_{Thick}\b| \f{\Rd s}{\b(\mathrm{dist}_t(\ga(s),\Si)\e)^2} \e| \le \b(\f{1}{\ep^2} \sum_{i = 1}^{C_t} \CD^i_t \e)
\label{eq:thick bound}
\end{equation}
Now we want to bound the integral over the thin part. Here the geodesic might pass through singularities $\si \in \Si$, so we want to modify our curve $\ga$ to avoid the singularities. By the definition of $\de_t$, we know that after removing the open ball of radius $\de_t$ about each $\si \in \Si$, the resulting surface is still path connected. Call this new surface $S_t \m D$. Then $\ga \i (S_t \m D)$ consists of at most $|\Si| + 1$ connected components, since we removed at most $|\Si|$ disjoint connected open sets from $S_t$. By choice of $\ga$, this holds for the domain of the parametrized curve as well as its image.

\begin{figure}[h]	
	\centering
	\includegraphics[scale = 0.5]{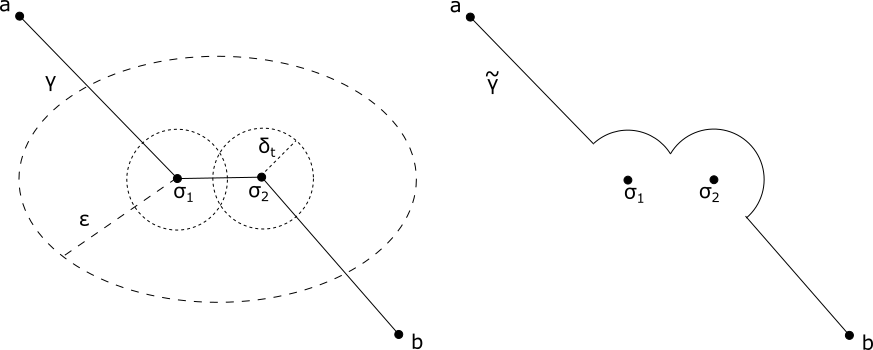}	
	\caption{Left: the geodesic $\ga$ connecting $a$ to $b$, which passes through the "thin" part determined by $\ep$. Right: the modified curve $\T\ga$, which stays at a distance at least $\de_t$ from all singularities $\si$.}
	\label{fig:curve mod}
\end{figure}

To connect each of these components of $\ga$, we can travel along the boundary of $S_t \m D$ at a distance $\de_t$ from $\Si$. The worst case scenario for connecting two components of $\ga \i (S_t \m D)$ is travelling around the circumference of all of the open balls surrounding every singularity to reach the next component of $\ga \i (S_t \m D)$, which must start at a distance $\de_t$ from a singularity. If $2\pi c > 0$ is the maximum angle around a conical singularity $\si$, then for each connection between components the worst case scenario is travelling a distance $|\Si| \times 2 \pi c \de_t$. We have to do this process at most $|\Si|$ times, since there are at most $|\Si| + 1$ components needing to be connected. In this way we obtain a new curve connecting $a$ and $b$, which is a union of geodesic segments and circular arcs. We will call this new curve $\Tga$. See Figure \ref{fig:curve mod} for a schematic of how to obtain $\T\ga$ from $\ga$.

Let $Thin_1$ be the set of $s$ where $\T\ga(s)$ is a distance $\de_t$ from the singularity. Since all of the previous work is for $Thin_1$, we use the trivial bound to obtain an upper bound on the first thin part:
\begin{equation}
\int_{Thin_1} \b| \f{\Rd s}{\b(\mathrm{dist}_t(\ga(s),\Si)\e)^2} \e| \le  \b(\f{2 \pi c |\Si|^2 \de_t }{\de_t^2}\e) =  \b(\f{2 \pi c |\Si|^2 }{\de_t}\e) 
\label{eq:thin bound 1}
\end{equation}

The third part of the integral is the set $Thin_2$ of $s$ so that $\de_t < \mathrm{dist}_t(\ga(s),\Si) < \ep$, which is still within the thin part but not controlled by our other estimate. 
For a fixed singularity $\si$ and $k \ge 0$, consider the open annuli $A_k(\si) = \mathrm{ann}(\si,2^k\de_t, 2^{k+1} \de_t)$. For small enough values of $k$, the intersection of $\T\ga$ with $A_k(\si)$, excluding pieces of $\T\ga$ already bounded in $Thin_1$, is a union of line segments along rays starting from $\si$. Hence their intersection with the annulus has length less than the diameter of the annulus, $2 \times 2^{k+1}\de_t$. Since the distance to the singularity is at most $2^k \de_t$, the contribution of $A_k(\si)$ to the integral using the trivial bound is at most:
\[
\f{2^{k + 2}\de_t}{\b(2^k \de_t \e)^2} = \f{4}{2^k \de_t}
\]
The intersection of  $\T\ga$ and $Thin_2$ can be covered by such annuli in the following sense: the curve is a disjoint union of line segments, and each line segment is included in one of these annuli as a segment of a ray starting from a singularity. So for one singularity, we can bound the contribution to the integral by the sum of the previous bound over all $k$:
\[
\sum_{k = 0}^{\oo} \f{4}{2^k \de_t} = \f{8}{\de_t}
\]

\begin{figure}[h]	
	\centering
	\includegraphics[scale = 0.5]{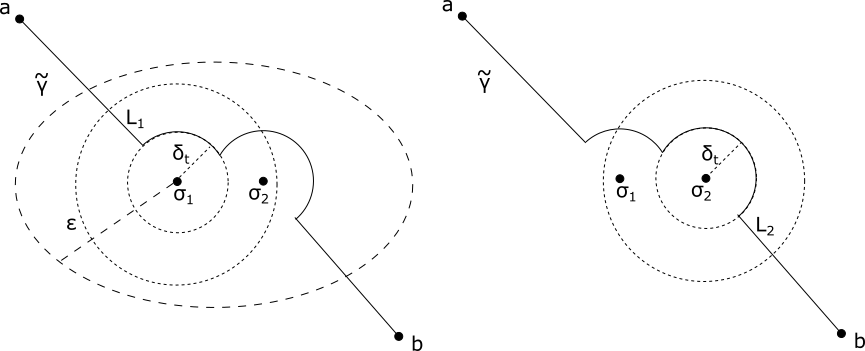}	
	\caption{Left: the annulus $A_0(\si_1)$, which contains the line segment $L_1$ of $\T\ga$ along a line starting from $\si_1$. The annulus $A_0(\si_1)$ controls the integral along $L_1$, but the annulus $A_1(\si_1)$ is needed to control the integral over the rest of that line in $Thin_2$. Right: the annulus $A_0(\si_2)$, which controls the integral over the line segment $L_2$.}
	\label{fig:annulus bound}
\end{figure}

Summing over the contributions from all singularities, we obtain the following bound on the third and final part of the integral:
\begin{equation}
\int_{Thin_2}\b| \f{\Rd s}{\b(\mathrm{dist}_t(\ga(s),\Si)\e)^2} \e| \le  \b( \f{8|\Si|}{\de_t}\e)
\label{eq:thin bound 2}
\end{equation}

Combining (\ref{eq:thick bound}), (\ref{eq:thin bound 1}), and (\ref{eq:thin bound 2}), we obtain the desired bound on (\ref{eq:essential integral}).
\begin{align*}
\b|R_t(z_i) - R_t(z_j)\e| &\le 8\|I_t\|_{L^2(S)} \b(\f{1}{\ep^2} \sum_{i = 1}^{C_t} \CD^i_t + \f{2 \pi c |\Si|^2 }{\de_t} + \f{8|\Si|}{\de_t}\e)\\
&\le C \b(\f{1}{\ep^2} \sum_{i = 1}^{C_t} \CD^i_t + \f{1}{\de_t}\e)\|I_t\|_{L^2(S)}
\end{align*}
Since $C$ depends only on the number of singularities and the maximum cone angle of a singularity, it depends only on the stratum of $\Ph$.

\end{proof}

\begin{lem}
The meromorphic functions $m_t$ are zero.
\label{lem:mero zero}
\end{lem}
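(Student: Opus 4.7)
The plan is to combine the differential equation (\ref{eq:mero ODE}) with the gradient bound of Lemma~\ref{lem:essential gradient bound} to establish an inequality of the form
\[
\|m_t\|_{L^2(S)}^2 \le K(\et)\,F(t)^2\,\|I_t\|_{L^2(S)}^2,
\]
where $F(t) = \f{1}{\ep(t)^2}\sum_{i=1}^{C_t}\CD_t^i + \f{C_t-1}{\de_t}$ is the quantity in condition (3). Since (\ref{eq:mero ODE}) shows $\|m_t\|_{L^2(S)}^2$ is monotone nondecreasing and bounded above by $\|u\|_{L^2(S)}^2$, such an estimate, combined with the hypothesis $\int_0^\oo F(t)^{-2}\,\Rd t = \oo$, will force $m_t \ee 0$ by a Gr\"onwall-type argument.

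Write $m_t = R_t + iI_t$. Since $1 \in \CM$ and $h_t \perp \CM$, $\int_S m_t = \int_S u = 0$, so $R_t$ and $I_t$ both have zero mean. Set $X_t = S_{\ep(t),t}$ and let $r_t,\,i_t$ be the averages of $R_t,\,I_t$ over $X_t$. Lemma~\ref{lem:essential gradient bound} applied to both real and imaginary parts yields $|R_t(z)-r_t|,\,|I_t(z)-i_t| \le CF(t)\|I_t\|_{L^2(S)}$ for all $z \in X_t$, while the bound $\la(S \m X_t) < \et$ combined with Cauchy-Schwarz and the vanishing means on $S$ gives $r_t^2,\,i_t^2 \le \f{\et}{(1-\et)^2}\|m_t\|_{L^2(S)}^2$. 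Applying the triangle inequality to $|m_t|^2 = R_t^2 + I_t^2$ and integrating over $X_t$ then produces
\[
\int_{X_t}|m_t|^2\,\Rd\la \le K_1\,\et\,\|m_t\|_{L^2(S)}^2 + K_2\,F(t)^2\,\|I_t\|_{L^2(S)}^2.
\]

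The main obstacle is controlling the complementary integral $\int_{S\m X_t}|m_t|^2\,\Rd\la$, since $m_t$ may blow up near the points of $\Si$. The plan is to exploit the meromorphic structure of $m_t$: the $L^2$ integrability with respect to $\la$ bounds the allowable orders of poles at each point of $\Si$, so in the compact case a Riemann--Roch argument shows that $\CM$ is a finite-dimensional subspace of $L^2(S)$. On such a subspace, compactness of the unit sphere yields, for any $\al \in (0,1)$, an $\et_0 > 0$ such that $\la(A) < \et_0$ implies $\|m\|_{L^2(A)}^2 \le \al\,\|m\|_{L^2(S)}^2$ uniformly for $m \in \CM$. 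Choosing $\et$ small enough that both the complementary contribution and the $K_1\et$-term are absorbable into the left-hand side yields the desired estimate. Combined with (\ref{eq:mero ODE}) this rearranges to $\f{\Rd}{\Rd t}\log \|m_t\|_{L^2(S)}^2 \ge 4/(K(\et)\,F(t)^2)$; assuming $m_{t_0}\ne 0$ for some $t_0$, monotonicity of the norm keeps the left side well-defined for $t \ge t_0$, and integrating from $t_0$ forces $\log\|m_T\|_{L^2(S)}^2 \to \oo$ as $T \to \oo$ by hypothesis, contradicting the upper bound by $\|u\|_{L^2(S)}^2$.
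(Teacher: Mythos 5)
Your argument takes a genuinely different route from the paper's. You aim for a pointwise-in-$t$ inequality $\|m_t\|^2_{L^2(S)} \le K\,F(t)^2\|I_t\|^2_{L^2(S)}$, with $F(t)$ the bracketed quantity in condition (3), and close with a Gr\"onwall integration. The paper instead deduces $\int_0^\oo\|I_t\|^2_{L^2}\,\Rd t < \oo$ from (\ref{eq:mero ODE}), hence $\liminf_{t\to\oo} F(t)\|I_t\|_{L^2}=0$, gets $\|m_\tau\|_{L^2}$ arbitrarily small along a sequence $\tau\to\oo$, and concludes $m_t\ee 0$ from the monotonicity of $t\mapsto\|m_t\|^2_{L^2}$ provided by the same ODE. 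Both proofs use (\ref{eq:mero ODE}), Lemma \ref{lem:essential gradient bound}, and the zero-average normalization; your closure is tidier, and it avoids the $L^\oo$ bound on $p^*u$ that the paper's Cauchy--Schwarz step silently uses.

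There is, however, a genuine gap in the absorption step. To control $\int_{S\m X_t}|m_t|^2\,\Rd\la$ you invoke finite-dimensionality of $\CM$ via Riemann--Roch and compactness of its unit sphere; this requires $S$ to be compact. But Lemma \ref{lem:mero zero} lives inside the proof of Theorem \ref{th:foliation ergodicity}, which the paper explicitly intends for non-compact finite-area flat surfaces, and on such a surface $\CM$ can be infinite-dimensional (the $L^2$ meromorphic functions form a Bergman-type space on a punctured or infinite-genus surface), so the uniform absolute continuity you rely on is not available. The paper avoids this by never integrating $|m_\tau|^2$ over the bad set at all: it uses orthogonality to write $\|p^*m_\tau\|^2_{L^2(\HS)} = \int_{\HS} p^*m_\tau\, p^*u\,\Rd\la$ and controls the bad-set piece by Cauchy--Schwarz against $u$, with no hypothesis on $\dim\CM$. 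That identity is only linear in $m_\tau$ on the right, so it does not feed your Gr\"onwall inequality directly; patching the non-compact case would essentially mean adopting the paper's subsequence-plus-monotonicity structure. Your argument as written does prove the lemma for compact $S$, and hence suffices for Theorem \ref{th:foliation unique ergodicity}, but it does not cover the generality of Theorem \ref{th:foliation ergodicity}.
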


\begin{proof}
Consider $p^*m_t = p^* R_t + i p^* I_t$. If $\ze_1, \ze_2 \in p\1(S_{\ep,t})$, then we observe each $p(\ze_k)$ is some $z_k \in S_{\ep,t}$, so that by definition of $p^*R_t$ and Lemma \ref{lem:essential gradient bound}:
\[
\b|p^* R_t(\ze_1) - p^*R_t (\ze_2) \e| \le C \b(\f{1}{\ep^2} \sum_{i = 1}^{C_t} \CD^i_t + \f{1}{\de_t} \e) \|I_t\|_{L^2(S)}
\]
The same bound holds for $p^* I_t$. Also observe that the Lebesgue measure of $p\1(S_{\ep,t})$ is at least $1 - \et$ (as Lebesgue downstairs is the pushforward by $p$ of Lebesgue upstairs).

Observe that we must have:
\[
\liminf_{t \to \oo} \b(\f{1}{\ep^2} \sum_{i = 1}^{C_t} \CD^i_t + \f{1}{\de_t} \e) \|p^*I_t\|_{L^2(\HS)} = 0
\]
If not, then as in \cite[Proof of Theorem 2]{MR3177386} there would be some constant $c$ so that for all (large enough) $t$:
\[
0 < c < \b(\f{1}{\ep^2} \sum_{i = 1}^{C_t} \CD^i_t + \f{1}{\de_t} \e) \|p^*I_t\|_{L^2(\HS)}
\]
Rearrange, square both sides, and integrate over the Teichm\"uller ray:
\begin{equation}
c^2 \int_0^\oo \b(\f{1}{\ep^2} \sum_{i = 1}^{C_t} \CD^i_t + \f{1}{\de_t} \e)^{-2} \, \Rd t < \int_0^\oo \|p^*I_t\|_{L^2(\HS)}^2 \, \Rd t \le \|p^* u\|_{L^2(\HS)} < \oo
\label{eq:finite integral}
\end{equation}
The second inequality follows from the differential equation (\ref{eq:mero ODE}) by integrating the imaginary part of $p^* m_t$. However, by equation (\ref{eq:systole2}), we can show that
\begin{equation}
\int_0^\oo \b(\f{1}{\ep^2} \sum_{i = 1}^{C_t} \CD^i_t + \f{1}{\de_t} \e)^{-2} \, \Rd t= \oo
\label{eq:infinite integral}
\end{equation}
which is a contradiction.

Indeed, first note that the number of components of the decomposition is uniformly bounded in time by a constant which depends only on the strata of $\Ph$. By \cite[Corollary 5.6]{MR1135877}, there is a constant $K$ so that if $D_t$ is the diameter of $S_t$ and $\ka(t)$ is the systole, and $D_t > \sqrt{2/\pi}$, then:
\begin{equation}
D_t \le \f{K}{\ka(t)}
\label{eq:systole diameter}
\end{equation}
Because the systole $\ka(t)$ is $2\de_t$ and $\ep$ is constant, we can see by a computation that divergence of the integral in (\ref{eq:systole2}) implies divergence of the integral in  (\ref{eq:infinite integral}), contradicting our hypothesis (\ref{eq:finite integral}).

Hence there must be arbitrarily large values of $t$ so that:
\[
\|p^*m_t\|_{L^\oo(p\1( S_{\ep,t}))} < \et
\]
We can use this to bound the $L^2$ norm of $m_t$. Let $\tau$ be one of these values of $t$. Then we can compute:
\[
\|p^* m_\tau\|_{L^2(\HS)}^2 = \int_{p\1(S_{\ep,\tau})} p^* m_\tau p^* u \, \Rd \la + \int_{\HS \m p\1(S_{\ep,\tau})} p^* m_\tau p^* u \, \Rd \la
\]
(note that because $p^*u$ is written as an orthogonal sum $p^*u = \d_\tau v_\tau + p^*m_\tau$, any terms involving $\d_\tau v$ vanish in the inner product). The idea of the bound is the following: the first integral is over a good set where $p^*m$ is small, and the second integral is over a bad set of small measure. More precisely:
\begin{align*}
\|p^* m_\tau\|_{L^2(\HS)}^2 &\le \|p^*m_t\|_{L^\oo(p\1( S_{\ep,\tau}))} \|p^*u\|_{L^1(\HS)} + \|p^*u\|_{L^\oo(\HS)} \int_{\HS \m p\1(S_{\ep,\tau})} |p^* m_\tau| \, \Rd \la \\
 &\le \et \|p^*u\|_{L^1(\HS)} + \|p^*u\|_{L^\oo(\HS)} (\la(p\1(S \m  S_{\ep,\tau}))^{1/2} \|p^* m_\tau\|_{L^2(\HS)}\\
 &\le \sqrt{\et} \b(\sqrt{\et} + \|p^*u\|_{L^\oo(\HS)} \e) \| p^*u \|_{L^2(\HS)} 
\end{align*}

Note that if $\|u\|_{L^{\oo}} = \oo$, we can replace it with an $L^2$ function of bounded norm without loss of generality, since level sets of invariant functions are invariant sets. Hence we can bound $\|p^*m_\tau\|_{L^2(\HS)} < \et$ for arbitrarily large values of $\tau$. Since $\et$ was arbitrary, by the differential equation (\ref{eq:mero ODE}), we see that $p^*m_t$ and hence $m_t$ must be zero for all $t$.

\end{proof}

Since $m_t = 0$, it suffices to show that $h_t = 0$ to conclude $u = 0$ almost everywhere. Following the argument in \cite[Proof of Theorem 2]{MR3177386}, it suffices to show that the horizontal foliation on $S_t$ is aperiodic to conclude that $h_t = 0$. Observe that there are no horizontal cylinders in $(S,\Ph)$. Indeed, if there was a horizontal cylinder, then $\de_t$ would be forced to decay exponentially, contradicting divergence of the integral in equation (\ref{eq:systole2}). A horizontal cylinder in $\HS$ would project to one on $S$, so there are no cylinders upstairs either. Hence, the horizontal foliation is aperiodic.


Now suppose that $a$ is a recurrent point. Let $\be_a$ be a short vertical segment through $a$, contained in a neighborhood which lifts homeomorphically by $p$ to $\HS_t$. By recurrence, there exists a sequence of points $a_k$ in the intersection of the horizontal leaf through $a$ and $\be_a$ such that $a_k \to a$, and $a_k \ne a$ by aperiodicity. Identify these points with their lifts in $\HS_t$. Since $v_t$ is continuous and $X(v_t) = 0$, we see that $v_t(a_k) = v_t(a)$ for all $k$. Since $p^*h_t = iY(v_t)$ and each of the $a_k$ lies along a vertical leaf through $a$, we can calculate that $p^*h_t(a) = Y(v_t)(a) = 0$. Since the set of recurrent points has full measure by Poincar\'e recurrence, we see that $h_t = 0$ Lebesgue almost everywhere. As our invariant function $u$ is equal to $m_t + h_t$, we conclude that $u = 0$ almost everywhere.

So if $u \in L^2(S)$  is constant along leaves, then $u$ is almost everywhere constant. We conclude that Lebesgue measure is ergodic for the horizontal foliation of $\Ph$.

\end{proof}

\B{Technical Remark:} Using only the bounds coming from the Cauchy integral formula in equation \ref{eq:distance squared}, we can get a bound of size roughly $C\de_t^{-3}\|I_t\|$ on the difference between values of $m_t$ at any two points in the thick part. This is because the length of the geodesic $\ga$ travelling through in the thin part is bounded by $\et \de_t^{-1} \le \de_t^{-1}$, and because the maximum value of the function being integrated is $\de_t^{-2}$. The length bound follows because we can embed a disc of radius $\de_t$ at any point on the geodesic. Since the measure of that set must be less than $\et$, we have $\l(\ga)\de_t \le \et$.

We can improve this bound to to $C\de_t^{-2}\|I_t\|$ because the integral is an average. Since the length of the curve is bounded, using a standard argument the value of the integral can be estimated by:
\[
\f{\et}{\de_t} \int_{\de_t}^{\oo} \f{1}{x^2} \, \Rd x = \f{\et}{\de_t} \times \f{1}{\de_t} \le \f{1}{\de_t^2}
\]
Following the rest of the proof, this would give ergodicity when the integral of $\ka(t)^4$ over a Teichm\"uller ray diverges. The purpose of Lemma \ref{lem:essential gradient bound} is to use flat geometry and compactness to improve this bound to $C\de_t^{-1}$, which improves the criterion to the integral of $\ka(t)^2$. We do not believe that the exponent can be reduced any further.

So in Trevi\aTL{n}o's setting for non-compact surfaces we have a bound of the form:
\begin{equation}
\b|m_t(\ze_1) - m_t (\ze_2) \e| \le C \b(\f{1}{\ep^2} \sum_{i = 1}^{C_t} \CD^i_t + \f{1}{\de_t^2} \e) \|I_t\|_{L^2(S)}
\label{eq:noncompact bound}
\end{equation}
and so the integral condition which we can prove guarantees ergodicity is:
\begin{equation}
\int_0^\oo \b(\f{1}{\ep^2} \sum_{i = 1}^{C_t} \CD^i_t + \f{1}{\de_t^2} \e)^{-2} \, \Rd t= \oo
\label{eq:infinite integral noncompact}
\end{equation}
For non-compact surfaces it may be possible to use flat geometry to lower the exponent on $\de_t$ in the integral just as in the compact case, but the geometry is harder to analyze.

\section{A Unique Ergodicity Criterion}
\label{sec:foliation unique ergodicity}

\begin{proof}[Proof of Theorem \ref{th:foliation unique ergodicity}]
First we apply Theorem \ref{th:foliation ergodicity},  and hence the horizontal foliation given by $\Ph$ is ergodic with respect to Lebesgue measure.



To upgrade to unique ergodicity, we use an argument of Veech \cite[Section 1]{MR516048}. Suppose that there is another invariant measure $\mu$ which is ergodic for the horizontal foliation of $\Ph$. Then $\mu$ is non-atomic, since the foliation must be minimal by the systole condition, and must be mutually singular with respect to the ergodic Lebesgue measure $\la_{\Ph}$. For any fixed $s \in (0,1)$, the measure $\mu(s) = s\la_{\Ph} + (1-s)\mu$ is invariant for the horizontal foliation. If we can show that $\mu(s)$ is ergodic, then we have a contradiction because it is not extreme in the simplex of invariant measures.

On the double cover $(\HS,\al)$, there is the Lebesgue measure $\la_\al$ and a measure $\H{\mu}$ which is flow invariant such that $p_*\H{\mu} = \mu$. We can also request that $\io_* \H{\mu} = \H{\mu}$: if $\nu$ is flow invariant and $p_* \nu = \mu$, then consider the measure $\io_* \nu$. By definition of the projection map $p$, we must have $p_* \io_*(\nu) = \mu$. As the horizontal flow commutes with the involution $\io$, $\io_* \nu$ is also flow invariant. So $\f{1}{2} \b(\nu + \io_* \nu \e)$ is a flow invariant measure which projects to $\mu$ and is fixed by the involution.

Define $\H{\mu}(s) = s\la_\al + (1 - s) \H{\mu}$, so that $p_* \H{\mu}(s) = \mu(s)$. Then by \cite[Proof of Theorem 3]{MR3177386} there exists a homeomorphism $\HF_s$ from $\HS$ to itself such that:
\begin{enumerate}
\item $\HF_s$ induces a new translation structure on $\HS$.
\item $\HF_s$ preserves the horizontal foliation on $\HS$ and fixes the singularities.
\item The new translation structure is induced by a unique abelian differential $\al(s)$ with the same horizontal foliation as $\al$.
\item $\H{\mu}(s)$ pushes forward to the Lebesgue measure of $\al(s)$:

\begin{equation}
(\HF_s)_* \H{\mu}(s) = \la_{\al(s)}
\end{equation}

\item If $\ga$ is a homotopically nontrivial simple closed curve or a saddle connection on $\HS$, then $\HF_s(\ga)$ satisfies:

\begin{equation}
\mathrm{length}_{\al(s)}(\HF_s(\ga)) \ge s \dt \mathrm{length}_{\al}(\ga)
\label{eq:length change}
\end{equation}

\end{enumerate}
Define an involution $\si_s$ on $(\HS,\al(s))$ by $\si_s = \HF_s \o \io \o \HF_s\1$. Modding out $(\HS,\al(s))$ by the action of $\si_s$ produces an orbifold $S'$ with a branched cover $p' : \HS \to S'$. The map $\HF_s$ descends to a homeomorphism $F_s : S \to S'$ such that $(F_s)_* \mu(s) = p'_* \la_{\al(s)}$. In fact, $\si_s$ is an isometry of $(\HS,\al(s))$ sending $\al(s)$ to $-\al(s)$, and so sends leaves of the horizontal flow to leaves. To show this, we recall that $\HF_s$ is defined on $\HS$ away from singularities of $\al$ in local coordinates centered at $0$ as:
\begin{equation}
\HF_s(x + iy) = x + \mathrm{sign}(y) \dt i \Upsilon_s(\l_y)
\end{equation}
where $\l_y$ is the line segment between $0$ and $y$. Here $\Upsilon_s$ is a transverse invariant measure defined by:
\begin{equation}
\Upsilon_s(\ga) = \lim_{\ep \to 0} \f{1}{\ep} \H\mu(s)([0,\ep] \dt \ga)
\end{equation}
where $[0,\ep] \dt \ga$ denotes the horizontal flow of the curve $\ga$ for all times between $0$ and $\ep$. Now, since $\H\mu(s)$ was chosen to be involution invariant, we see that 
$\Upsilon_s(\ga) = \Upsilon_s(\io (\ga))$. A computation in local coordinates then shows that $\si_s$ is an isometry of $(\HS,\al(s))$.

Hence $S'$ has a Riemann surface structure, and in fact a flat surface structure from a quadratic differential $\Ph(s)$ such that $(p')^*(\Ph(s)) = \al(s)^2$. Observe also that the homeomorphism $F_s$ takes leaves of the horizontal foliation on $(S,\Ph)$ to leaves on $(S',\Ph(s))$ because $\HF_s$ takes leaves to leaves. By our previous observations we have $(F_s)_* \mu(s) = \la_{\Ph(s)}$, so that $F_s$ takes our convex combination to Lebesgue measure.

Now consider the systoles on $S_t$ and $S'_t$. We claim their lengths are related by:
\begin{equation}
\ka_{S'}(t) \ge s \ka_S(t)
\label{eq:systole bound}
\end{equation}
Indeed, let $\ga$ be any simple closed nontrivial curve on $S_t$, which corresponds uniquely to a curve $F_s(\ga)$ on $S_t'$. Up to removing the singularities on $S_t$ and $\HS_t$, the map $p$ is a covering map, so we can lift $\ga$ to a finite union of curves $\be_i$ on $\HS_t$. See Figure \ref{fig:curve lengths} for a schematic of the lifting. By (\ref{eq:length change}), $\mathrm{length}_{g_t \dt \al(s)}(\HF_s(\be_i)) \ge s \dt \mathrm{length}_{g_t  \dt \al}(\be_i)$. By uniqueness of lifts of paths, the union of the $\HF_s(\be_i)$ is a lift of $F_s(\ga)$. Since the covering maps are isometries restricted to the $\be_i$, we compute:
\begin{equation*}
\mathrm{length}_{g_t \dt \Ph(s)}(F_s(\ga)) = \sum_i \mathrm{length}_{g_t \dt \al(s)}(\HF_s(\be_i)) \ge s \sum_i \mathrm{length}_{g_t \dt \al}(\be_i) = s \dt \mathrm{length}_{g_t \dt \Ph}(\ga)
\end{equation*}
which proves our claim.

\begin{figure}[h]	
	\centering
	\includegraphics[scale = 0.5]{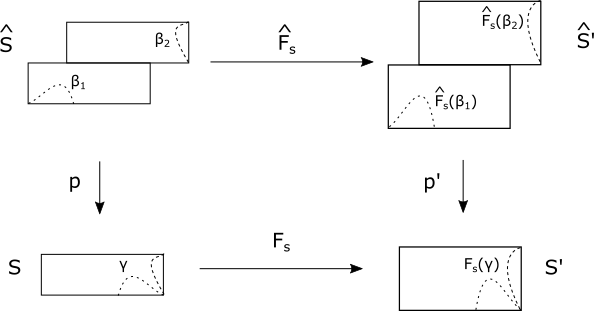}	
	\caption{The union of the curves $\be_1,\be_2$ projects to $\ga$. Since $\HF_s$ does not distort their lengths too much, $F_s$ does not distort the length of $\ga$ too much.}
	\label{fig:curve lengths}
\end{figure}

By (\ref{eq:systole bound}), Theorem \ref{th:foliation ergodicity} applies to $(S',\Ph(s))$:
\[
\int_0^{\oo} \ka_{S'}(t)^2 \, \Rd t\ge s^2 \int_0^{\oo} \ka_S(t)^2 \, \Rd t = \oo
\]
So the horizontal foliation of $(S',\Ph(s))$ is ergodic with respect to Lebesgue measure. But this is impossible, since by our construction Lebesgue measure $\la_{\Ph(s)}$ is a nontrivial convex combination of two invariant measures. Hence Lebesgue measure on $S$ must have been the only invariant measure to begin with, and so the horizontal foliation of $\Ph$ is uniquely ergodic.

\end{proof}

\section{Covering Spaces}
\label{sec:covers}


Our proof of Theorem \ref{th:cover ergodicity} is a consequence of a proposition about lengths of curves. The necessary covering space theory can be found in \cite[Chapter 1]{MR1867354}.

\begin{prop}
Let $(M,\Ph)$ be a flat surface and  $P : \HM \to M$ a finite degree covering map. For every simple closed curve $\ga$ on $\HM$ which is not homotopically trivial, there is a simple closed nontrivial curve $\be$ on $M$ with $\mathrm{length}(\ga) \ge \mathrm{length}(\be)$.
\label{prop:covering space criterion}
\end{prop}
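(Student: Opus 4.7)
The natural candidate is $\be_0 = P \circ \ga$. Because $P$ is a genuine covering map, it is a local isometry for the flat metrics, so $\mathrm{length}(\be_0) = \mathrm{length}(\ga)$. Moreover, the induced map $P_{*} : \pi_1(\HM) \to \pi_1(M)$ is injective, so the class $[\be_0] = P_{*}[\ga]$ is nontrivial in $\pi_1(M)$. If $\be_0$ is already simple, we set $\be = \be_0$ and are done.

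If not, the plan is to cut $\be_0$ at a self-intersection and retain a nontrivial half, iterating until the resulting loop is simple. Concretely, if $\be_0(s) = \be_0(t) = x$ for some distinct $s, t$ in the parameter circle, then $\be_0$ decomposes as a concatenation $\al_1 \ast \al_2$ of two sub-loops based at $x$. In $\pi_1(M, x)$ we have $[\be_0] = [\al_1][\al_2]$; since $[\be_0] \ne 1$, at least one $\al_i$ is nontrivial. Replacing $\be_0$ with such a nontrivial $\al_i$ yields a loop of strictly smaller length (both pieces have positive length), strictly fewer self-intersections (the intersection at $x$ is removed and no new ones are introduced), and still homotopically nontrivial.

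To guarantee termination in finitely many steps, I would first replace $\ga$ by the shortest representative $\ga_0$ of its free homotopy class on $\HM$: a closed geodesic in the pulled-back flat metric with $\mathrm{length}(\ga_0) \le \mathrm{length}(\ga)$, still simple by a standard innermost-bigon argument. The projection $P \circ \ga_0$ is then piecewise linear in flat coordinates on $M$, and a finite union of straight segments on a flat surface has only finitely many self-intersections. Therefore the cutting procedure terminates after finitely many iterations and produces the required simple closed nontrivial curve $\be$ with
\[
\mathrm{length}(\be) \le \mathrm{length}(P \circ \ga_0) = \mathrm{length}(\ga_0) \le \mathrm{length}(\ga).
\]

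The main obstacle will be to justify cleanly the reduction to a simple, length-minimizing geodesic representative in the presence of the cone singularities $\Si$: existence of a minimizer in the free homotopy class on a compact flat surface, preservation of simplicity under the tightening, and the piecewise-linear structure of the resulting curve near $\Si$ (which is what ensures finitely many self-intersections after projection). Each of these is classical, but they all require a short verification around the singular set and around points where $\ga_0$ may pass through a cone point.
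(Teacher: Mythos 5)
Your proposal follows the paper's first half exactly (project $\ga$, use injectivity of $P_*$ and local isometry of $P$), but then diverges: where you cut iteratively at self-intersections and argue termination via a geodesic representative, the paper does it in one shot. Specifically, the paper takes the \emph{first} self-intersection of $P\ga$: the minimal $s\in(0,1)$ such that $P\ga(s)=P\ga(r)$ for some $r<s$, and sets $\be=P\ga|_{[r,s]}$. Minimality of $s$ forces $\be$ to be injective on $[r,s)$, so $\be$ is simple immediately, with no iteration and no need to count self-intersections. Nontriviality of this $\be$ is then deduced by lifting $\ga$ to the universal cover $\TM$ of both $\HM$ and $M$: the lift $\T\ga$ is injective on $[0,1]$ (using that $\ga$ is simple and nontrivial), hence $\T\ga(r)\ne\T\ga(s)$, so the lift of $\be$ is not a loop and $\be$ is nontrivial. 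This replaces your group-theoretic ``at least one of $[\al_1],[\al_2]$ is nontrivial'' step by a direct argument that pins down a specific nontrivial subloop, which is what lets the iteration be avoided.

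There is a genuine gap in your termination argument. You claim that replacing $\ga$ by a geodesic representative $\ga_0$ makes $P\ga_0$ piecewise linear and hence gives finitely many self-intersections. But the projection of a simple closed geodesic can traverse the \emph{same} geodesic arc on $M$ more than once, producing a continuum of self-intersection points rather than finitely many. A concrete example: $M=\R^2/\Z^2$, $\HM=\R^2/(2\Z\times\Z)$, $\ga_0(t)=(2t,0)$; then $\ga_0$ is a simple closed geodesic on $\HM$ but $P\ga_0$ wraps the core curve $\{y=0\}$ twice, so every $s\in[0,1/2)$ gives a self-intersection $P\ga_0(s)=P\ga_0(s+1/2)$. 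In this situation ``strictly fewer self-intersections after a cut'' is not a well-defined decreasing integer, so your termination argument does not close as written. The fix is essentially the paper's minimality trick: cut at the first self-intersection, which yields a simple subloop in a single step and sidesteps both termination and the geodesic-representative machinery. As a secondary point, the reduction to $\ga_0$ (existence, simplicity of the geodesic tightening, behavior at cone points) is extra work you have flagged but not done; the paper avoids it entirely by working with the given piecewise smooth $\ga$.
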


\begin{proof}
Observe that if the covering map has finite degree, then $\HM$ is also compact, and  $(\HM,P^* \Ph)$ is a compact flat surface. Let $\ga : [0,1] \to \HM$ be a piecewise smooth simple closed curve, and $P\ga$ its projection to $M$. Covering space theory tells us that $P_* : \pi_1(\HM) \to \pi_1(M)$ is an injective map. Hence if $[\ga] \ne 1 \in \pi_1(\HM)$, then $[P\ga] \ne 1 \in \pi_1(M)$. Since $P$ is a local isometry for the given flat structures, and the measurement of length is a local computation, we observe that, as parametrized curves, $\mathrm{length}(\ga) = \mathrm{length}(P\ga)$. We can view $\ga$ as a map $S^1 \to \HM$, and $\ga$ being simple tells us that we can assume this map is injective. If $P\ga : S^1 \to M$ is also injective, then $\mathrm{length}(\ga) = \mathrm{length}(P\ga)$ as curves, and so we can take $\be = P\ga$.

For the case when $P\ga$ is not injective, we find a subcurve which is injective. Since $P$ is a finite degree covering map and $\ga$ is injective, it follows that for each $t_0 \in [0,1)$, there are at most finitely many times $t_i \in [0,1)$ with $P\ga(t_0) = P\ga(t_i)$. So if $P\ga$ is not injective as a map from $S^1$, there is some minimal $s \in (0,1)$ such that $P\ga(s) = P\ga(r)$ for some $0 \le r < s < 1$. Define $\be$ to be the restriction of $P\ga$ to $[r,s]$. Minimality of $s$ means that $\be$ is injective as a map of $[r,s)$ into $M$, so the curve is simple, and closed because $P\ga(r) = P\ga(s)$ by construction. Since $\be$ is the restriction of $P\ga$ we see that $\mathrm{length}(\ga) = \mathrm{length}(P\ga) \ge \mathrm{length}(\be)$. Since $\ga$ and $\be$ are both injective maps, these lengths are their lengths as curves.

Finally, $\be$ is homotopically nontrivial. Indeed, consider the universal cover $\TM$ of $\HM$, and let $Q: \TM \to \HM$ be the covering map. Since $\HM$ is a cover of $M$, $\TM$ is also the universal cover of $M$. For any $m \in Q\1(\ga(0))$, there is a unique lift $\T\ga$ of $\ga$ with $\T\ga(0) = m$. Observe that $\T\ga$ is also a lift of $P\ga$. Since we assumed that $\ga$ was an injective map on $S^1$, $\T\ga : [0,1] \to \TM$ is also injective, where $\T\ga(0) \ne \T\ga(1)$ follows because $\ga$ is homotopically nontrivial. Since $\T\ga$ is injective, we see that $\T\ga(r) \ne \T\ga(s)$ in the universal cover but $P\ga(r) = P\ga(s)$. This proves that $\be$ is homotopically nontrivial.

\end{proof}

\begin{proof}[Proof of Theorem \ref{th:cover ergodicity}]

Proposition \ref{prop:covering space criterion} shows that the systoles on the covering space $R$ must satisfy:
\begin{equation}
\int_0^\oo \ka_{R}(t)^2 \, \Rd t \ge \int_0^\oo \ka_{S}(t)^2 \, \Rd t = \oo
\label{eq:cover comp}
\end{equation}
which by Theorem \ref{th:foliation unique ergodicity} implies that the horizontal foliation on $R$ is uniquely ergodic.

\end{proof}

If we only assume Trevi\aTL{n}o's Theorem \ref{th:rodrigo theorem 4}, then we have a short proof of Theorem \ref{th:foliation unique ergodicity} in the special case where the double covering map has no branching. Let $R$ in the Theorem be the orientation double cover $\HS$ of $S$. Since the systoles given by $(\HS,\al^2)$ are the same as the systoles given by $(\HS,\al)$, we see by equation (\ref{eq:cover comp}) that the horizontal flow on $(\HS,\al)$ is uniquely ergodic. By definition the simplex of invariant measures for $(S,\Ph)$ contains only the Lebesgue measure as an extreme point, so the horizontal foliation on $(S,\Ph)$ is uniquely ergodic.

The fact that covering maps induce an injective map on the fundamental group is necessary for the previous proof. Note that the in example of Theorem \ref{th:double NUE}, the induced map $p_*$ has a kernel on $\pi_1$. Indeed, let $\ga$ be a path formed by concatenating two paths $\ga_2 \dt \ga_1$, depicted in Figure \ref{fig:double kernel}.  Then $[\ga] \ne 1$ in $\pi_1(\HS,\si)$, where $\si$ is either singularity. However, $p_*([\ga]) = 1$, because $p(\ga_1)$ and $p(\ga_2)$ are the same curve but with opposite orientations, so the parametrized curve is homotopic to a constant path.

\begin{figure}[h]	
	\centering
	\includegraphics[scale = 0.4]{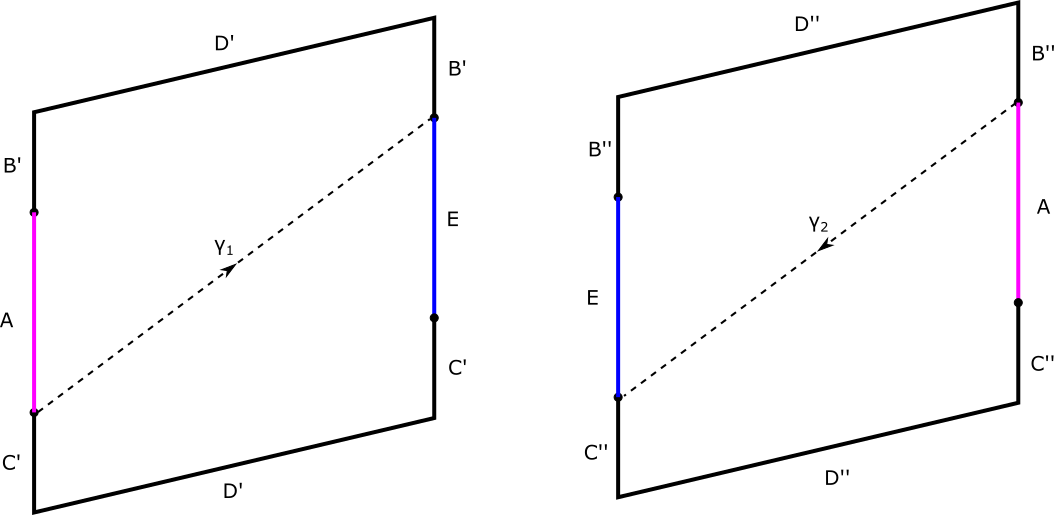}	
	\caption{A curve $\ga$ in the kernel of the induced map on $\pi_1$ in Theorem \ref{th:double NUE}. Here $\ga$ is the union of $\ga_1$, the path traveling on the upward diagonal line between the bottom left singularity and the top right singularity in the left polygon, and $\ga_2$, the path traveling on the same line in the reverse direction in the right polygon. Note that $\ga$ is separating, so is nontrivial.}
	\label{fig:double kernel}
\end{figure}




\phantomsection
\addcontentsline{toc}{section}{References}
\bibliographystyle{amsalpha}

\bibliography{UEQD_bib}

\end{document}